\documentclass[preprint,12pt]{elsarticle}

\usepackage[T1]{fontenc}
\usepackage[utf8]{inputenc}
\usepackage{lmodern}
\usepackage{microtype}
\usepackage{amsmath,amssymb,amsthm,mathtools}
\usepackage{booktabs}
\usepackage{enumitem}
\usepackage[colorlinks=true,linkcolor=blue,citecolor=blue,urlcolor=blue]{hyperref}

\journal{Finite Fields and Their Applications}

\theoremstyle{plain}
\newtheorem{theorem}{Theorem}[section]
\newtheorem{proposition}[theorem]{Proposition}
\newtheorem{corollary}[theorem]{Corollary}
\newtheorem{lemma}[theorem]{Lemma}

\theoremstyle{definition}
\newtheorem{definition}[theorem]{Definition}

\theoremstyle{remark}
\newtheorem{remark}[theorem]{Remark}

\DeclareMathOperator{\Gal}{Gal}
\DeclareMathOperator{\Fix}{Fix}
\DeclareMathOperator{\Frob}{Frob}
\DeclareMathOperator{\AGL}{AGL}
\DeclareMathOperator{\Aut}{Aut}
\DeclareMathOperator{\rank}{rank}
\DeclareMathOperator{\Nm}{Nm}

\newcommand{\F}{\mathbb F}
\newcommand{\Pone}{\mathbb P^1}
\newcommand{\Aone}{\mathbb A^1}
\newcommand{\cO}{\mathcal O}

\numberwithin{equation}{section}

\begin{document}

\begin{frontmatter}

\title{Affine monodromy and exact value distributions over finite fields}

\author{David Kumallagov}

\begin{abstract}
We study finite field value distributions through the fixed-point statistics
of monodromy groups.  For a regular degree-\(n\) cover, omitted values are
controlled by derangements.  Thus natural symmetric monodromy gives support
density \(1-D_n/n!\to 1-e^{-1}\), while the Cameron--Cohen bound gives the
universal ceiling \(1-1/n\), attained by sharply \(2\)-transitive affine
monodromy.

We give an explicit polynomial realization of this optimal mechanism.  For
\(N=p^e\) and \(h\mid N-1\), set
\[
  \Lambda_{N,h}(U)=U\bigl(U^{(N-1)/h}-1\bigr)^h .
\]
Its geometric Galois closure is rational,
\[
  U=z^h,
  \qquad
  T=(z^N-z)^h,
\]
and its geometric monodromy is the affine group
\((\F_N,+)\rtimes H_h\).  For every extension
\(\F_Q/\F_p\) we compute the complete fibre enumerator of
\(\Lambda_{N,h}\) exactly, including the nonregular cases.
In the full affine case \(h=N-1\), the polynomial
\[
  U(U-1)^{N-1}
\]
attains the Wan--Shiue--Chen upper bound for non-permutation polynomials
over every finite field containing \(\F_N\); over arbitrary extensions we
compute the exact defect from that bound.
\end{abstract}

\begin{keyword}
finite fields \sep value sets \sep polynomial maps \sep fibre distribution \sep Chebotarev density theorem \sep affine monodromy \sep derangements \sep cyclotomic mappings
\MSC[2020] 11T06 \sep 11T71 \sep 14G15 \sep 14H30 \sep 20B05
\end{keyword}

\end{frontmatter}

\section{Introduction}
\label{sec:intro}

Let \(f\in\F_Q[U]\).  Its value set $V_f=f(\F_Q)$
and its fibre statistics
\[
  r_Q(t)=\#\{u\in\F_Q:f(u)=t\},\quad t\in\F_Q,
\]
measure how far \(f\) is from being a permutation polynomial.  Value sets
of polynomials over finite fields have been studied since Cohen's work on
polynomial distributions \cite{Cohen1970,Cohen1981}.  The sharp upper
bound for a non-permutation polynomial of degree \(d\) used below is the
one-variable case of Mullen--Wan--Wang
\cite[Theorem~2.1]{MullenWanWang2012}, building on Wan--Shiue--Chen
\cite[Theorem~1]{WanShiueChen1993}:
\begin{equation}
  \#V_f\le Q-\left\lceil\frac{Q-1}{d}\right\rceil.
  \label{eq:intro-wsc}
\end{equation}

A useful way to understand such questions is through monodromy.  If
\(\pi:X\to Y\) is a finite separable cover over a finite field, then
Chebotarev relates the distribution of finite-field fibres to fixed
points of Frobenius elements in the monodromy action.  In particular,
a target value is omitted precisely when the corresponding Frobenius
acts without fixed points on the geometric generic fibre.  Thus omitted
values are governed by derangements.

This viewpoint shows that large degree alone does not make a
one-dimensional cover nearly surjective.  For a cover with natural
symmetric monodromy \(S_n\), the limiting support density is
\[
  1-\frac{D_n}{n!},
\]
where \(D_n\) is the number of derangements in \(S_n\); this tends to
\(1-e^{-1}\).  The opposite mechanism is special affine
monodromy. The Cameron--Cohen derangement bound
\cite[Theorem~1]{CameronCohen1992} gives
\[
  \delta(G,\Omega)\ge \frac1n
\]
for every nontrivial transitive action of degree \(n\), with equality
exactly in the sharply \(2\)-transitive case.  Consequently the largest
possible regular support density in degree \(n\) is \(1-1/n\), and it is
attained by sharply \(2\)-transitive affine actions.

The purpose of this paper is to give an explicit polynomial realization
of this optimal affine mechanism and to compute its finite-field fibres
exactly.  Let
\[
  N=p^e,\qquad h\mid N-1,\qquad A=\frac{N-1}{h},
\]
and define
\begin{equation}
  \Lambda_{N,h}(U)=U(U^A-1)^h\in\F_p[U].
  \label{eq:intro-Lambda}
\end{equation}
The basic identity
\begin{equation}
  U=z^h,
  \quad
  T=(z^N-z)^h=U(U^A-1)^h
  \label{eq:intro-galois-closure}
\end{equation}
shows that the geometric Galois closure of
\(T=\Lambda_{N,h}(U)\) is rational and has affine monodromy
\[
  G_{N,h}=(\F_N,+)\rtimes H_h,
\]
where \(H_h\le\F_N^\times\) is the subgroup of order \(h\).  For
\(h=N-1\) this is the full affine group \(\AGL(1,N)\), the standard
sharply \(2\)-transitive affine action.

Affine-type polynomial monodromy is classical in the theory of
exceptional polynomials and primitive polynomial monodromy groups; see
\cite{FriedGuralnickSaxl1993,GuralnickMuller1997,GuralnickSaxl1995,
Muller1995,Guralnick2008}.  Also, over a fixed finite field, the maps
\(\Lambda_{N,h}\) are special first-order generalized cyclotomic mappings;
see \cite{WanLidl1991}, \cite[Theorems~1.1.3 and~1.1.4]{BorsWang2021},
\cite[Definitions~2.1--2.2 and Theorem~2.4]{ZhengEtAl2025}, and
\cite[Theorems~1--3]{GaoWang2014}.  Our main contribution
is the explicit scalar-affine Galois closure above, together with a complete
 fibre law and its consequences for optimal support.

The main exact theorem is as follows.  Let \(Q=p^m\), and put
\[
  s=p^{\gcd(e,m)},\qquad
  C=\frac{N-1}{s-1},\qquad
  \kappa=\gcd(h,C).
\]
Then
\[
  r_Q(0)=1+\frac{\kappa(s-1)}h.
\]
Among nonzero target values the only possible fibre sizes are \(0\),
\(1\), and \(s\), with exact multiplicities
\[
  \frac{\kappa(s-1)Q}{hs},\qquad
  \frac{(h-\kappa)(Q-1)}h,\qquad
  \frac{\kappa(Q-s)}{hs},
\]
respectively.  In particular,
\[
  \#\Lambda_{N,h}(\F_Q)
  =
  Q\left(1-\frac{\kappa(s-1)}{hs}\right).
\]
In the full affine case \(h=N-1\), this becomes
\[
  \#\Lambda_N(\F_Q)=Q\left(1-\frac1s\right),
  \qquad
  \Lambda_N(U)=U(U-1)^{N-1}.
\]
Hence if \(\F_N\subseteq\F_Q\), then \(s=N\) and
\[
  \#\Lambda_N(\F_Q)
  =
  Q-\frac QN
  =
  Q-\left\lceil\frac{Q-1}{N}\right\rceil.
\]
Thus \(\Lambda_N\) is a non-permutation polynomial attaining the
Wan--Shiue--Chen upper bound over every finite field containing \(\F_N\).
Over arbitrary extensions we compute the exact defect from this bound.

The final section compares this affine mechanism with the generic
symmetric one.  Symmetric monodromy has support density tending to
\(1-e^{-1}\), while the full affine family realizes the optimal density
\(1-1/N\).  Thus the paper gives a concrete finite-field model showing
how special affine monodromy, rather than increasing degree with generic
symmetric monodromy, produces nearly maximal value sets.

The paper is organized as follows.  Section~\ref{sec:fixedpoint} recalls
the fixed-point Chebotarev framework, including constant field cosets,
derangements, and affine fixed-point profiles.  Section~\ref{sec:lambda-monodromy}
computes the geometric and arithmetic monodromy of \(\Lambda_{N,h}\).
Section~\ref{sec:exact} proves the exact all-extension fibre law, and
Section~\ref{sec:fibre} derives the fibre enumerator, moments, value-set
size, and Wan--Shiue--Chen defect.  Section~\ref{sec:pullback} records
coprime pullbacks to elliptic bases.  Section~\ref{sec:symmetric}
compares the affine and symmetric monodromy mechanisms.

\medskip
\noindent\textbf{Conventions.} All finite fields of characteristic $p$ are regarded as subfields of a fixed algebraic closure $\overline{\F}_p$.  Thus $\F_q\cap\F_N=\F_{p^{\gcd(m,e)}}$ for $q=p^m$ and $N=p^e$, and $\F_q\cdot\F_N$ denotes the compositum $\F_{p^{\operatorname{lcm}(m,e)}}$.  Fibres over finite fields are counted as sets of rational points, without multiplicity.

\section{Fixed points, constant fields, and affine profiles}
\label{sec:fixedpoint}

\subsection{Covers, monodromy, and Frobenius cosets}

We recall some notions used throughout the paper.

\begin{definition} \label{def:monodromy}
Let $k=\F_q$, let $X$ and $Y$ be smooth projective geometrically connected curves over $k$, and let $\pi:X\longrightarrow Y$
be a finite separable morphism of degree $n$.  Put $K=k(Y)$ and $E=k(X)$.  Let $L/K$ be the Galois closure of the finite separable extension $E/K$.  The \emph{arithmetic monodromy group} is
\[
  A_\pi=\Gal(L/K).
\]
Let $k_L=L\cap\overline{k}$ be the algebraic closure of $k$ in $L$.  The \emph{geometric monodromy group} is
\[
  G_\pi=\Gal(L/k_L(Y)).
\]
Then $G_\pi\triangleleft A_\pi$ and
\[
  A_\pi/G_\pi\simeq \Gal(k_L/k).
\]
We say that the Galois closure is \emph{regular over $k$} if $k_L=k$.
The group $A_\pi$ acts on the set 
\[
  \Omega=\operatorname{Hom}_K(E,L),
\]
in a standard way.
Since $E/K$ is separable of degree $n$, the set
$\operatorname{Hom}_K(E,\overline K)$ has cardinality $n$, and the Galois
closure $L$ contains the images of all these embeddings.  Thus
$|\Omega|=n$.  Equivalently, $\Omega$ is the set of geometric points of
the geometric generic fibre of $\pi$.
\end{definition}

For a polynomial $f\in k[U]$ of degree $n$ with $f'(U)\ne0$, this definition says that $L$ is the splitting field of $f(U)-T$ over $k(T)$ and $\Omega$ is the set of its $n$ roots over an algebraic closure.  The geometric monodromy is obtained after extending constants to $\overline{k}$, or equivalently after replacing $k$ by $k_L$ in the Galois closure.

\begin{definition}
\label{def:branch}
A point $y\in Y(\overline{k})$ is a \emph{branch value} of $\pi$ if some point above $y$ has ramification index greater than one.  If $Y=\Pone_T$, a branch value lying in $\Aone_T$ is called a \emph{finite branch value}.  For $Q=q^m$ and $y\in Y(\F_Q)$, put
\[
  r_Q(y)=\#\pi^{-1}(y)(\F_Q),
\]
where geometric points in the fibre are counted without multiplicity.  For $0\le j\le n$, put
\[
  N_j(Q)=\#\{y\in Y(\F_Q):r_Q(y)=j\}.
\]
\end{definition}

Let $k_L=\F_{q^c}$.  Let $\bar\phi\in A_\pi/G_\pi$ be the image of arithmetic Frobenius on constants.  For $m\ge1$ define the Frobenius coset
\begin{equation}
  A_m=\{\sigma\in A_\pi:\sigma G_\pi=\bar\phi^m\}.
  \label{eq:frob-coset}
\end{equation}
Thus $A_m$ is a coset of $G_\pi$ in $A_\pi$ and depends only on $m\bmod c$.  For $\sigma\in A_\pi$ write
\[
  F(\sigma)=\#\Fix_\Omega(\sigma).
\]

\begin{theorem}
\label{thm:coset-chebotarev}
With the notation above, for every $0\le j\le n$ and $Q=q^m$,
\begin{equation}
  N_j(Q)=\rho_j(m)\#Y(\F_Q)+O_\pi(Q^{1/2}),
  \label{eq:coset-cheb}
\end{equation}
where
\begin{equation}
  \rho_j(m)=\frac1{|G_\pi|}\#\{\sigma\in A_m:F(\sigma)=j\}.
  \label{eq:rhoj-coset}
\end{equation}
  If $k$ is algebraically closed in $L$, then $A_\pi=G_\pi$ and
\[
  \rho_j=\frac1{|G_\pi|}\#\{\sigma\in G_\pi:F(\sigma)=j\}.
\]
\end{theorem}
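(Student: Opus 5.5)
The plan is to reduce the count $N_j(Q)$ to the Chebotarev density theorem for function fields, applied to the Galois extension $L/K$ over the unramified part of $Y$. First I remove the finitely many branch values of $\pi$ together with the points of $Y$ ramified in $L/K$. This is a finite set $B$ with $\#B=O_\pi(1)$, so those points change $N_j(Q)$ by $O_\pi(1)$, which is absorbed in the error term. On the complement $Y^\circ=Y\setminus B$, the cover $Z\to Y^\circ$ is étale Galois with group $A_\pi$ acting on the normalization $Z$ of $Y$ in $L$. Here $Z$ is a curve over $k$ whose constant field is $k_L=\F_{q^c}$.

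The key local step is to attach to each $y\in Y^\circ(\F_Q)$, with $Q=q^m$, the conjugacy class of a Frobenius element $\Frob_y^{(m)}\in A_\pi$. This is the $m$-th power of the geometric or arithmetic Frobenius of the closed point under $y$, taken relative to its residue degree, or equivalently the $\F_Q$-Frobenius of $y$ viewed as a point of $Y_{\F_Q}$. I will check two facts.
\begin{enumerate}[label=(\roman*)]
\item Its image in $A_\pi/G_\pi\simeq\Gal(k_L/k)$ is $\bar\phi^m$, since it acts on constants as the $Q$-power map. Hence $\Frob_y^{(m)}\in A_m$.
\item $r_Q(y)=F(\Frob_y^{(m)})$. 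Over an unramified $y$, the geometric points of $\pi^{-1}(y)$ are in Frobenius-equivariant bijection with $\Omega=\operatorname{Hom}_K(E,L)$: fix a point of $Z$ over $y$ and use the decomposition group. A geometric point is $\F_Q$-rational exactly when it is fixed by the $\F_Q$-Frobenius.
\end{enumerate}
Fixed-point counts are conjugation invariant, so $F$ is well defined on the class.

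Next I apply the Chebotarev density theorem with constant field extension, in the form for degree-one places of $Y_{\F_Q}$. An example is the effective version in Fried--Jarden, Proposition 6.4.8, or Murty--Scherk. It says that for every conjugacy class $\mathcal C\subseteq A_m$ (a union of $G_\pi$-conjugacy classes stable under $A_\pi$),
\[
  \#\{y\in Y^\circ(\F_Q):\Frob_y^{(m)}\in\mathcal C\}=\frac{|\mathcal C|}{|G_\pi|}\#Y(\F_Q)+O_\pi(Q^{1/2}),
\]
and this count is empty if $\mathcal C\not\subseteq A_m$. The implied constant depends only on $|A_\pi|$ and the genus of $Z$, hence only on $\pi$. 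The normalization is $|G_\pi|$ rather than $|A_\pi|$ because $A_m$ is a single $G_\pi$-coset, and these cosets partition the degree-one places according to $m\bmod c$. Summing over the classes $\mathcal C\subseteq A_m$ on which $F=j$ gives \eqref{eq:coset-cheb} with $\rho_j(m)$ as in \eqref{eq:rhoj-coset}. In the regular case $c=1$ we have $A_m=A_\pi=G_\pi$, which gives the last formula.

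I expect the main obstacle to be step (ii) together with the bookkeeping in step (i). The difficulty is to make precise which Frobenius (arithmetic versus geometric, and the $m$-th power for points whose residue field is a subfield of $\F_Q$) lies in the coset $A_m$ defined through $\bar\phi^m$, and to confirm that the identification of fibres with $\Omega$ intertwines Frobenius with the $A_\pi$-action. I would settle this by working over $\F_Q$ directly. Base change $\pi$ to $\F_Q$, note that the Galois closure of $E\F_Q/K\F_Q$ has group equal to the preimage of $\langle\bar\phi^m\rangle$, and then quote the standard statement that a degree-one place $y$ has $\Frob_y$ acting on the fibre as the $\F_Q$-Frobenius on geometric points. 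This reduces everything to the ordinary Chebotarev statement for the extension $L\F_Q/K\F_Q$ restricted to the coset $A_m$.
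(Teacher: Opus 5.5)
Your proposal is correct and follows the same route as the paper's proof. Both discard the finite branch locus, place the $\F_Q$-Frobenius of each unramified rational point in the coset $A_m$, identify $\F_Q$-rational fibre points with fixed points on $\Omega$, and apply the function-field Chebotarev theorem to the union of classes in $A_m$ on which $F=j$. Your extra bookkeeping makes explicit what the paper leaves implicit: points whose residue degree $d$ properly divides $m$, the base change to $\F_Q$, and the normalization by $|G_\pi|$.
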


\begin{proof}
Let $B\subset Y(\overline{k})$ be the finite branch locus.  We first count
points in $Y(\F_Q)\setminus B$.  For such a point $y$, the Frobenius
conjugacy class $\Frob_y$ is defined in $A_\pi$.  Since $y$ is
$\F_Q$-rational, its action on the constant field $k_L$ is
$a\mapsto a^Q=a^{q^m}$; hence $\Frob_y$ lies in the coset $A_m$.

The standard specialization argument identifies the number of
$\F_Q$-rational points in the fibre with the number of fixed points of
Frobenius on the geometric generic fibre $r_Q(y)=\#\Fix_\Omega(\Frob_y)$.
Applying the function-field Chebotarev theorem (cf. \cite[Theorems~9.13A and~9.13B]{Stichtenoth2009} and 
\cite[Theorem~1.1 and Corollary~1.2]{Kosters2014}) to the union of conjugacy
classes in $A_m$ on which $F(\sigma)=j$  gives
\[
  \#\{y\in Y(\F_Q)\setminus B:F(\Frob_y)=j\}
  =\rho_j(m)\#Y(\F_Q)+O_\pi(Q^{1/2}).
\]
The finitely many branch values contribute $O_\pi(1)$, which is absorbed
in the error term.  This proves our claimes. 
\end{proof}

\begin{remark}
1) The ``constant field correction'' is essential.  If $k_L\ne k$, the limiting distribution is not the fixed-point distribution of the geometric group $G_\pi$ itself, but the periodic sequence of fixed-point distributions on the cosets $A_m$.  This is the same arithmetic phenomenon that occurs in exceptional covers.

2) For applications of this fixed-point and derangement viewpoint to covers of
curves over finite fields, see also
\cite[Theorem~1.1]{GuralnickWan1997}; for the general specialization
framework see \cite[Proposition~6.4.8]{FriedJarden2008}.
\end{remark}

\subsection{Distributional consequences}

For a transitive permutation group $G$ on a finite set $\Omega$, define
\[
  \rho_j(G,\Omega)=\frac1{|G|}\#\{g\in G:\#\Fix_\Omega(g)=j\}.
\]
The quantity
\[
  \delta(G,\Omega)=\rho_0(G,\Omega)
\]
is the \emph{derangement proportion}.  In the regular case of Theorem~\ref{thm:coset-chebotarev}, it is the limiting proportion of omitted target values.

\begin{corollary}
\label{cor:distributional}
Under the hypotheses of Theorem~\ref{thm:coset-chebotarev},
\begin{equation}
  \frac{\#\pi(X(\F_Q))}{\#Y(\F_Q)}
  =1-\rho_0(m)+O_\pi(Q^{-1/2}).
  \label{eq:support-general}
\end{equation}
Moreover, for each fixed integer $a\ge1$,
\begin{equation}
  \frac1{\#Y(\F_Q)}\sum_{y\in Y(\F_Q)} r_Q(y)^a
  =\frac1{|G_\pi|}\sum_{\sigma\in A_m}F(\sigma)^a+O_{\pi,a}(Q^{-1/2}).
  \label{eq:fibre-moments-general}
\end{equation}
\end{corollary}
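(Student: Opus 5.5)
Both statements follow from Theorem~\ref{thm:coset-chebotarev} by summing over the possible fibre sizes $j$. The one auxiliary fact I need is that the proportions in each Frobenius coset sum to one:
\[
  \sum_{j=0}^n\rho_j(m)=\frac{|A_m|}{|G_\pi|}=1,
\]
since $A_m$ is a single coset of $G_\pi$ and so has exactly $|G_\pi|$ elements.

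\emph{Support formula.} The image $\pi(X(\F_Q))$ is the set of $y\in Y(\F_Q)$ with $r_Q(y)\ge1$. Its size is therefore
\[
  \#Y(\F_Q)-N_0(Q).
\]
Taking $j=0$ in \eqref{eq:coset-cheb} gives
\[
  N_0(Q)=\rho_0(m)\#Y(\F_Q)+O_\pi(Q^{1/2}).
\]
I then divide by $\#Y(\F_Q)$. By Weil, $\#Y(\F_Q)=Q+O(g\,Q^{1/2})\gg Q$ for all $Q$ outside finitely many small values, and those values can be absorbed into the implied constant. Hence $O_\pi(Q^{1/2})/\#Y(\F_Q)=O_\pi(Q^{-1/2})$, which is \eqref{eq:support-general}. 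The same conclusion also follows from the normalisation above, written as $\sum_{j\ge1}\rho_j(m)=1-\rho_0(m)$.

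\emph{Moments.} I group the sum by fibre size:
\[
  \sum_{y\in Y(\F_Q)}r_Q(y)^a=\sum_{j=0}^n j^a N_j(Q).
\]
Fibre sizes are at most $n$ because fibres are counted without multiplicity, so this is a finite sum over $0\le j\le n$. Substituting \eqref{eq:coset-cheb} for each $j$ gives
\[
  \#Y(\F_Q)\sum_j j^a\rho_j(m)+O_{\pi,a}(Q^{1/2}),
\]
where the error constant is at most $n^a(n+1)$ times the Chebotarev constant. By the definition of $\rho_j(m)$,
\[
  \sum_j j^a\rho_j(m)=\frac1{|G_\pi|}\sum_{\sigma\in A_m}F(\sigma)^a.
\]
Dividing by $\#Y(\F_Q)$ as before gives \eqref{eq:fibre-moments-general}.

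\emph{Main obstacle.} The only step needing care is the lower bound $\#Y(\F_Q)\gg Q$, which is what converts the absolute errors into relative errors of size $O(Q^{-1/2})$. This comes from Hasse--Weil, with the finitely many small $Q$ absorbed into the constant.
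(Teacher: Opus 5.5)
Your proposal is correct and follows the paper's argument: the paper likewise sums \eqref{eq:coset-cheb} over $j\ge1$ (equivalently, your use of $j=0$ together with $\sum_j\rho_j(m)=1$) for the support, and over all $j$ with weights $j^a$ for the moments. Your remark that dividing by $\#Y(\F_Q)$ needs the Hasse--Weil lower bound $\#Y(\F_Q)\gg Q$ makes explicit a step the paper leaves implicit.
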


\begin{proof}
This is obtained by summing the asymptotic formula \eqref{eq:coset-cheb} over $j\ge1$ for the support and over all $j$ with weight $j^a$ for the moments.
\end{proof}

\begin{definition}
Let $G$ act transitively on $\Omega$, $|\Omega|=n$.  The \emph{rank} of the action is
\[
  \rank(G,\Omega)=\#(G\backslash(\Omega\times\Omega)),
\]
the number of orbits on ordered pairs.  Equivalently, it is the number of orbits of a point stabilizer $G_\alpha$ on $\Omega$.  The action is \emph{sharply $2$-transitive} if for any ordered pairs $(\alpha,\beta)$ and $(\alpha',\beta')$ with $\alpha\ne\beta$ and $\alpha'\ne\beta'$, there is a unique $g\in G$ satisfying $g\alpha=\alpha'$ and $g\beta=\beta'$.
\end{definition}

\begin{theorem}
\label{thm:CC}
Let $G$ be a nontrivial transitive permutation group of degree $n$ and rank $r$.  Then
\begin{equation}
  \delta(G,\Omega)\ge \frac{r-1}{n}\ge\frac1n.
  \label{eq:CC-bound}
\end{equation}
Moreover, the case $\delta(G,\Omega)=1/n$ occurs if and only if the action is sharply $2$-transitive; see  \cite[Theorem~1]{CameronCohen1992}.
\end{theorem}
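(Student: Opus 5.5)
The plan has three parts: use Burnside's lemma together with a rank decomposition to get the inequality, then prove the equality characterization in both directions.

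\emph{Step 1: Burnside and the second moment.} Since $G$ is transitive, Burnside's lemma gives $\frac1{|G|}\sum_{g}F(g)=1$. Applying Burnside to the action on $\Omega\times\Omega$ gives $\frac1{|G|}\sum_g F(g)^2=r$. Now observe that every $g$ satisfies the pointwise inequality
\[
  (F(g)-1)(F(g)-n)\le 0 \quad\text{whenever } F(g)\ge1,
\]
since then $1\le F(g)\le n$. When $F(g)=0$ the same expression equals $n$. Summing over $G$ yields
\[
  \frac1{|G|}\sum_g (F(g)-1)(F(g)-n)\le n\,\delta(G,\Omega).
\]
The left side expands to $r-(n+1)+n=r-1$. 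Hence $\delta\ge (r-1)/n$. Because $G$ is nontrivial and transitive with $n\ge2$, the action has rank $r\ge2$, which gives the second inequality $\ge 1/n$.

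\emph{Step 2: equality forces sharp $2$-transitivity.} Suppose $\delta=1/n$. Both inequalities must then be equalities, so $r=2$ and the action is $2$-transitive. In addition, every nonderangement $g$ satisfies $F(g)\in\{1,n\}$. Since the action is faithful, this says every nonidentity element fixes at most one point, so the stabilizer of any two points is trivial. Combined with $2$-transitivity, this is exactly sharp $2$-transitivity.

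\emph{Step 3: the converse.} Suppose the action is sharply $2$-transitive. Then $|G|=n(n-1)$ and nonidentity elements fix at most one point. Each point stabilizer has $n-2$ nonidentity elements, and these fix exactly one point. Counting, the elements with exactly one fixed point number $n(n-2)$. The derangements therefore number $n(n-1)-1-n(n-2)=n-1$, so $\delta=(n-1)/(n(n-1))=1/n$.

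The main obstacle is the pointwise inequality in Step 1 and its treatment of derangements. The clean route is to notice that $(F-1)(F-n)$ equals $n$ at $F=0$ and is nonpositive on $[1,n]$. Everything else is routine Burnside counting. Faithfulness is used implicitly in Step 2 and is assumed as part of being a permutation group. Alternatively, one may simply cite \cite[Theorem~1]{CameronCohen1992}, as the statement does.
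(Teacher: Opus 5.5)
Your proof is correct and complete. The paper gives no argument of its own for this theorem and simply defers to \cite[Theorem~1]{CameronCohen1992}. What you wrote is essentially the original Cameron--Cohen proof: Burnside on $\Omega$ and on $\Omega\times\Omega$ gives $\frac1{|G|}\sum_g F(g)=1$ and $\frac1{|G|}\sum_g F(g)^2=r$, and then you use the sign of $(F-1)(F-n)$ on $\{0,1,\dots,n\}$. So the route is the same as the cited one.

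Writing it out makes the statement self-contained. It also shows directly that the rank refinement $\delta\ge(r-1)/n$, which Corollary~\ref{cor:ceiling} relies on, comes from the same computation. Finally, it makes explicit that faithfulness ($F(g)=n$ only for $g=1$) and $n\ge2$ are what drive the equality case.
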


\begin{corollary}
\label{cor:ceiling}
Let $\pi:X\to Y$ be a degree $n$ cover whose Galois closure is regular over $\F_q$ in the sense of Definition~\ref{def:monodromy}.  If $G$ is the geometric monodromy group and $r=\rank(G,\Omega)$, then
\[
  \frac{\#\pi(X(\F_Q))}{\#Y(\F_Q)}
  \le 1-\frac{r-1}{n}+O_\pi(Q^{-1/2}).
\]
In particular, every degree-$n$ cover with regular Galois closure has limiting support at most $1-1/n$, and equality in the main term is possible precisely for sharply $2$-transitive monodromy.
\end{corollary}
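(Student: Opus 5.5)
The plan is to combine Corollary~\ref{cor:distributional} with the Cameron--Cohen bound, Theorem~\ref{thm:CC}. Since the Galois closure is regular over $\F_q$, Definition~\ref{def:monodromy} gives $k_L=\F_q$, so $A_\pi=G_\pi=G$. Hence every Frobenius coset $A_m$ in \eqref{eq:frob-coset} equals $G$ itself, independently of $m$. Consequently the quantity $\rho_0(m)$ of \eqref{eq:rhoj-coset} is simply the derangement proportion $\delta(G,\Omega)$.

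First I would substitute $\rho_0(m)=\delta(G,\Omega)$ into \eqref{eq:support-general}, giving
\[
  \frac{\#\pi(X(\F_Q))}{\#Y(\F_Q)}=1-\delta(G,\Omega)+O_\pi(Q^{-1/2}).
\]
Second, I would check the hypotheses of Theorem~\ref{thm:CC}. The action of $G$ on $\Omega$ is transitive, because $X$ is geometrically connected, so $E\bar k/K\bar k$ is a field extension and $G$ permutes its $n$ embeddings transitively. The action is nontrivial as long as $n\ge2$; the case $n=1$ is degenerate, with support equal to $1$ and $r=1$, so the first bound still holds there. Third, applying \eqref{eq:CC-bound} gives $\delta\ge (r-1)/n\ge 1/n$, which yields both displayed inequalities, with the error term unchanged.

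For the equality statement, the main term equals $1-1/n$ exactly when $\delta(G,\Omega)=1/n$. By the equality clause of Theorem~\ref{thm:CC}, this happens precisely when the action is sharply $2$-transitive. I read ``equality in the main term'' as equality of the limiting density $1-\delta$ with $1-1/n$, so the claim follows directly from that clause.

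The only real subtlety, and so the step most likely to need care, is the transitivity of the geometric monodromy action. This is what lets Theorem~\ref{thm:CC} apply to $G$ rather than only to $A_\pi$. Under regularity the two groups coincide, so even transitivity of $A_\pi$, which is automatic from irreducibility of $E/K$, suffices. No further obstacle is expected.
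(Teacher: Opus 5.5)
Your proposal is correct and matches the paper's proof, which simply combines Corollary~\ref{cor:distributional} in the regular case (where $A_\pi=G_\pi$, so $\rho_0(m)=\delta(G,\Omega)$) with the Cameron--Cohen bound of Theorem~\ref{thm:CC}. Your checks of transitivity and of the degenerate case $n=1$, where only the rank inequality survives and the $1-1/n$ ceiling does not, are details the paper leaves implicit.
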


\begin{proof}
Combine Corollary~\ref{cor:distributional} in the regular case with Theorem~\ref{thm:CC}.
\end{proof}

\subsection{Affine fixed-point profiles}

Let $V$ be a finite vector space,  and let $H\le\operatorname{GL}(V)$.  The affine group
\[
  G=V\rtimes H
\]
acts on $V$ by
\[
  (b,A):x\longmapsto Ax+b.
\]
For $A\in H$ put
\[
  k(A)=|\ker(A-I)|.
\]

\begin{proposition}
\label{prop:affine-enumerator}
The fixed-point enumerator in the affine action of $V\rtimes H$ is
\begin{equation}
  \Phi_G(Z)=\frac1{|H|}\sum_{A\in H}
  \left(1-\frac1{k(A)}+\frac{Z^{k(A)}}{k(A)}\right).
  \label{eq:affine-enumerator}
\end{equation}
Thus
\begin{equation}
  \delta(G,V)=1-\frac1{|H|}\sum_{A\in H}\frac1{k(A)},
  \label{eq:affine-derangement}
\end{equation}
and for every integer $a\ge1$,
\begin{equation}
  \frac1{|G|}\sum_{g\in G}\#\Fix_V(g)^a
  =\frac1{|H|}\sum_{A\in H}k(A)^{a-1}.
  \label{eq:affine-moments}
\end{equation}
\end{proposition}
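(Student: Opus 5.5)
The plan is to count the fixed points of each element of \(G=V\rtimes H\) directly. Then I group the elements by their linear part \(A\in H\) and average over the translations \(b\in V\). The enumerator is \(\Phi_G(Z)=\frac1{|G|}\sum_{g\in G}Z^{\#\Fix_V(g)}\), and \(|G|=|V|\,|H|\).

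\textbf{Fixed points of a single element.} For \(g=(b,A)\), a point \(x\) is fixed exactly when \((A-I)x=-b\). This is an affine linear equation in \(x\). It has a solution precisely when \(-b\in\operatorname{im}(A-I)\), and in that case the solution set is a coset of \(\ker(A-I)\), so it has \(k(A)\) elements. Otherwise \(g\) has no fixed points.

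\textbf{Averaging over translations.} By rank--nullity, \(|\operatorname{im}(A-I)|=|V|/k(A)\). So for fixed \(A\):
\begin{itemize}
\item exactly \(|V|/k(A)\) translations \(b\) give \(k(A)\) fixed points;
\item the remaining \(|V|(1-1/k(A))\) translations give \(0\) fixed points.
\end{itemize}
Dividing by \(|V|\), the contribution of \(A\) to \(|V|^{-1}\sum_b Z^{\#\Fix_V(b,A)}\) is
\[
1-\frac1{k(A)}+\frac{Z^{k(A)}}{k(A)}.
\]
Averaging over \(A\in H\) gives \eqref{eq:affine-enumerator}.

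\textbf{Consequences.} The derangement proportion is the constant term of \(\Phi_G\). Here the \(Z^{k(A)}\) term is never constant, because \(k(A)\ge1\) (the zero vector always lies in the kernel). Taking the constant term therefore gives \eqref{eq:affine-derangement}. For the moments, apply \((Z\frac{d}{dZ})^a\) and set \(Z=1\); equivalently, sum directly. Each \(A\) contributes
\[
\frac1{k(A)}\cdot k(A)^a=k(A)^{a-1},
\]
and averaging over \(H\) gives \eqref{eq:affine-moments}; this uses \(a\ge1\), so the zero-fixed-point elements contribute nothing.

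There is no real obstacle here. The only point needing care is the rank--nullity count of the translations \(b\) for which the affine equation is solvable, together with the observation that \(k(A)\ge1\), so the constant term of \(\Phi_G\) is read off correctly.
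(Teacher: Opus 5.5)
Your proposal is correct and matches the paper's proof essentially verbatim: fix the linear part $A$, observe that $(I-A)x=b$ is soluble for exactly $|V|/k(A)$ translations with a coset of $\ker(I-A)$ as solution set, and average over $H$. Your extra remarks, that $k(A)\ge1$ keeps the $Z^{k(A)}$ term out of the constant term and that $a\ge1$ removes the derangements from the moments, spell out what the paper leaves as ``follow immediately.''
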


\begin{proof}
For fixed linear part $A$, the fixed-point equation is
\[
  (I-A)x=b.
\]
It is soluble precisely for $|V|/k(A)$ translations $b$, and for each soluble $b$ the solution set is a coset of $\ker(I-A)$ of size $k(A)$.  Averaging over $A\in H$ gives \eqref{eq:affine-enumerator}; the derangement and moment formulae follow immediately.
\end{proof}

Let $N=p^e$, and let $H_h\le\F_N^\times$ be the subgroup of order $h$, where $h\mid N-1$.  Put
\[
  G_{N,h}=(\F_N,+)\rtimes H_h.
\]

\begin{corollary}
\label{cor:scalar-profile}
In the natural action of $G_{N,h}$ on $\F_N$, the only nonzero fixed-point probabilities are
\begin{equation}
  \rho_0=\frac{N-1}{Nh},
  \qquad
  \rho_1=\frac{h-1}{h},
  \qquad
  \rho_N=\frac1{Nh}.
  \label{eq:scalar-profile}
\end{equation}
The rank of the action is
\[
  1+\frac{N-1}{h}.
\]
In particular, for $h=N-1$ the action is sharply $2$-transitive and $\rho_0=1/N$.
\end{corollary}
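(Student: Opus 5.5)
We apply Proposition~\ref{prop:affine-enumerator} with $V=\F_N$, regarded as a one-dimensional space over $\F_N$, and $H=H_h$ acting by scalar multiplication. For $A=\lambda\in H_h$, the map $A-I$ is multiplication by $\lambda-1$. It is therefore invertible when $\lambda\ne1$ and zero when $\lambda=1$. Consequently $k(\lambda)=1$ for the $h-1$ nontrivial scalars, and $k(1)=N$.

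Substituting into \eqref{eq:affine-enumerator} gives
\[
  \Phi_G(Z)=\frac1h\Bigl((h-1)Z+1-\frac1N+\frac{Z^N}{N}\Bigr).
\]
Each $\lambda\ne1$ contributes $Z$, because $1-1+Z=Z$. Reading off coefficients yields $\rho_0=\frac{N-1}{Nh}$, $\rho_1=\frac{h-1}{h}$ and $\rho_N=\frac1{Nh}$, with all other $\rho_j$ equal to zero. When $N=2$ we have $h=1$, so $\rho_1=0$ and $Z^N=Z^2$; the listed values are still correct.

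For the rank we use the description as the number of orbits of a point stabilizer. Because $G$ is transitive, we may take the point $0$. Its stabilizer is $\{(0,\lambda)\}\cong H_h$, which acts on $\F_N$ by multiplication. The orbits are $\{0\}$ together with the cosets of $H_h$ in $\F_N^\times$, and there are $(N-1)/h$ of the latter. The rank is therefore $1+(N-1)/h$.

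For $h=N-1$ the rank equals $2$, so the action is $2$-transitive. It is sharply $2$-transitive because $|G_{N,N-1}|=N(N-1)$, which equals the number of ordered pairs of distinct points. Alternatively, sharpness follows directly from the fact that $x\mapsto\lambda x+b$ is uniquely determined by the images of two distinct points. Setting $h=N-1$ in the formula for $\rho_0$ gives $\rho_0=1/N$, matching the equality case of Theorem~\ref{thm:CC}. No step presents a real obstacle; the only point requiring care is the degenerate case $N=2$ noted above.
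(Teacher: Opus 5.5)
Your proof is correct and follows the paper's argument: the paper also splits by scalar part (all $N$ points fixed by the identity, no fixed points for the other $N-1$ translations, exactly one fixed point whenever the scalar is not $1$), which is what your substitution into Proposition~\ref{prop:affine-enumerator} encodes, and it obtains the rank from the orbits of $H_h$ on $\F_N$. Your counting argument $|G_{N,N-1}|=N(N-1)$ for sharp $2$-transitivity spells out a step the paper leaves implicit.
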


\begin{proof}
If the scalar part is $1$, the identity translation fixes all $N$ points and the other $N-1$ translations fix none.  If the scalar part is not $1$, then $I-A$ is invertible, and every translation has exactly one fixed point.  The rank formula follows from the orbits of $H_h$ on $\F_N$: one orbit is $\{0\}$ and the remaining $(N-1)/h$ orbits lie in $\F_N^\times$.
\end{proof}

\begin{corollary}
\label{cor:semilinear-profile}
Let $m\ge1$, put $s=p^{\gcd(e,m)}$, $C=(N-1)/(s-1)$, and $\kappa=\gcd(h,C)$.  Consider the coset of affine semilinear transformations of $\F_N$
\[
  \mathcal C_m=\bigl\{x\mapsto \alpha x^{p^m}+b:
       \alpha\in H_h,\ b\in\F_N\bigr\}.
\]
In this coset the proportions of elements with $0$, $1$, and $s$ fixed points are respectively
\begin{equation}
  \frac{\kappa(s-1)}{hs},\qquad
  \frac{h-\kappa}{h},\qquad
  \frac{\kappa}{hs},
  \label{eq:semilinear-profile}
\end{equation}
and no other fixed-point counts occur.
\end{corollary}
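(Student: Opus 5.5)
Fix $\alpha\in H_h$ and write $\sigma(x)=x^{p^m}$. The fixed points of $g(x)=\alpha\sigma(x)+b$ are the solutions in $\F_N$ of $\alpha x^{p^m}-x=-b$. The map $L_\alpha(x)=\alpha x^{p^m}-x$ is $\F_s$-linear on $\F_N$, since $\sigma$ restricted to $\F_N$ generates the Galois group of $\F_N/\F_s$ and fixes $\F_s$ pointwise. The proof therefore mirrors Proposition~\ref{prop:affine-enumerator}. For each $\alpha$, either $L_\alpha$ is bijective, in which case every $b$ gives exactly one fixed point; or $\ker L_\alpha\neq0$, in which case exactly $N/|\ker L_\alpha|$ values of $b$ give $|\ker L_\alpha|$ fixed points and the rest give none. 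The whole argument thus reduces to two things: determining $k(\alpha)=|\ker L_\alpha|$, and counting the $\alpha\in H_h$ with $k(\alpha)>1$.

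\textbf{Step 1: size of the kernel.} A nonzero $x$ lies in $\ker L_\alpha$ iff $x^{p^m-1}=\alpha^{-1}$. Suppose $x_0$ is one nonzero solution. Then every other nonzero solution is $x=x_0y$ with $y^{p^m-1}=1$ and $y\in\F_N$, i.e. $y\in\F_{p^m}^\times\cap\F_N^\times=\F_s^\times$. Hence the kernel is $\{0\}\cup x_0\F_s^\times$, of size exactly $s$. So only the counts $0$, $1$, $s$ occur, which gives the last assertion of the corollary.

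\textbf{Step 2: which $\alpha$ have nontrivial kernel.} We need $\alpha^{-1}$ to lie in the image of $x\mapsto x^{p^m-1}$ on $\F_N^\times$. The group $\F_N^\times$ is cyclic of order $N-1$, so this image is the unique subgroup of index $\gcd(p^m-1,N-1)$. Since $\gcd(p^m-1,p^e-1)=p^{\gcd(m,e)}-1=s-1$, the image is the subgroup of order $(N-1)/(s-1)=C$. Intersecting with $H_h$, which has order $h$, inside a cyclic group gives the subgroup of order $\gcd(h,C)=\kappa$. As $\alpha^{-1}$ ranges over $H_h$ when $\alpha$ does, exactly $\kappa$ values of $\alpha$ give kernel of size $s$, and the remaining $h-\kappa$ give a bijective $L_\alpha$.

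\textbf{Step 3: averaging over the $hN$ elements of $\mathcal C_m$.}
\begin{itemize}
\item The $h-\kappa$ bijective $\alpha$ contribute $(h-\kappa)N$ elements with exactly one fixed point.
\item Each of the $\kappa$ singular $\alpha$ contributes $N/s$ elements with $s$ fixed points and $N-N/s$ elements with none.
\end{itemize}
Dividing by $hN$ gives the proportions $\kappa(s-1)/(hs)$, $(h-\kappa)/h$ and $\kappa/(hs)$ for $0$, $1$ and $s$ fixed points respectively, as claimed.

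The only delicate point is Step 2, namely the identity $\gcd(p^m-1,p^e-1)=s-1$ and the intersection of subgroups in a cyclic group. Both are standard.
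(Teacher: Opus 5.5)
Your proof is correct and follows the paper's argument. You fix $\alpha$, find that the additive map $x\mapsto \alpha x^{p^m}-x$ has kernel of size $1$ or $s$, count the $\kappa$ singular $\alpha\in H_h$, and average over translations as in Proposition~\ref{prop:affine-enumerator}. The one cosmetic difference is that you characterize the singular $\alpha$ through the image of $x\mapsto x^{p^m-1}$ rather than the condition $\Nm_{\F_N/\F_s}(\alpha^{-1})=1$; both give the same subgroup of order $C$, as the paper itself notes in Lemma~\ref{lem:parameters}.
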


\begin{proof}
For fixed $\alpha\in H_h$, the linearized fixed-point equation is
\[
  x^{p^m}=\alpha^{-1}x.
\]
It has a nonzero solution if and only if
$\Nm_{\F_N/\F_s}(\alpha^{-1})=1,$
and then its solution space has size $s$; otherwise its kernel is trivial.  Since the norm is exponentiation by $C=(N-1)/(s-1)$ on $\F_N^\times$, exactly $\kappa=\gcd(h,C)$ elements of $H_h$ satisfy the norm-one condition.  For those $\alpha$, a fraction $1/s$ of translations $b$ give an affine equation with $s$ fixed points and the remaining fraction give none.  For the other $h-\kappa$ values of $\alpha$, every translation gives exactly one fixed point.
\end{proof}

\section{The scalar-affine polynomial family}
\label{sec:lambda-monodromy}

Let $p$ be prime, let $N=p^e$, and let $h\mid N-1$.  Throughout the rest of the paper set
\begin{equation}
  A=\frac{N-1}{h},
  \qquad
  \Lambda_{N,h}(U)=U(U^A-1)^h\in\F_p[U].
  \label{eq:Lambda-def}
\end{equation}

\begin{remark}
\label{rem:cyclotomic}
Fix $Q=p^m$ and put $\beta_Q=\gcd(A,Q-1)$.  On $\F_Q^\times$, the map $u\mapsto u^A$ is constant on the cosets of the subgroup
\[
  K_A=\{u\in\F_Q^\times:u^A=1\},\qquad |K_A|=\beta_Q.
\]
If $u^A=c$ on such a coset, then
\[
  \Lambda_{N,h}(u)=(c-1)^h u.
\]
Thus, over each fixed $\F_Q$, $\Lambda_{N,h}$ is a first-order generalized cyclotomic mapping of index $(Q-1)/\beta_Q$, with zero coefficients allowed.    The point of the present paper is the uniform all-extension law and the affine Galois-closure interpretation.
\end{remark}

\begin{theorem}
\label{thm:polynomial}
Let $k$ be an algebraically closed field of characteristic $p$.  The polynomial cover
\[
  \lambda_{N,h}:\Pone_U\longrightarrow\Pone_T,
  \qquad
  T=\Lambda_{N,h}(U),
\]
has degree $N$.  Its geometric Galois closure is the rational function field $k(z)$ with
\begin{equation}
  U=z^h,
  \qquad
  T=(z^N-z)^h.
  \label{eq:galois-closure}
\end{equation}
The geometric monodromy group in its degree $N$ action is
\begin{equation}
  G_{N,h}=(\F_N,+)\rtimes H_h,
  \label{eq:GNh}
\end{equation}
where $H_h\le\F_N^\times$ is the subgroup of order $h$.

The only possible finite branch value is $0$.  Above $0$, the point $U=0$ is unramified and the roots of $U^A=1$ have ramification index $h$.  The point $\infty_U$ lies above $\infty_T$ and is totally ramified.  Hence the branch values are $0$ and $\infty$ if $h>1$, and only $\infty$ if $h=1$.
\end{theorem}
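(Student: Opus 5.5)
The plan is to verify the identity \eqref{eq:galois-closure} directly, read off a group of automorphisms of $k(z)$ fixing $T$, and compare degrees. First, if $U=z^h$, then
\[
U^A=z^{hA}=z^{N-1},\qquad
U(U^A-1)^h=z^h(z^{N-1}-1)^h=(z^N-z)^h,
\]
so $k(z)\supseteq k(U)\supseteq k(T)$ with $[k(z):k(U)]=h$ and $[k(U):k(T)]=\deg\Lambda_{N,h}=1+Ah=N$. Hence $[k(z):k(T)]=Nh$, and the cover $\lambda_{N,h}$ has degree $N$. For $\zeta\in H_h$ (the $h$-th roots of unity, which lie in $\F_N^\times\subset k$) and $b\in\F_N$, set $\sigma_{\zeta,b}(z)=\zeta z+b$. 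Because $(\zeta z+b)^N-(\zeta z+b)=\zeta(z^N-z)$ (using $\zeta^N=\zeta$ and $b^N=b$), and $\zeta^h=1$, each $\sigma_{\zeta,b}$ fixes $T$. These maps form a group isomorphic to $(\F_N,+)\rtimes H_h$ of order $Nh=[k(z):k(T)]$. So $k(z)/k(T)$ is Galois with this group, by Artin's theorem. It is separable as well, since the automorphism group has order equal to the degree.

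Next I would identify the subgroup fixing $k(U)$. The element $U=z^h$ is fixed exactly by the $\sigma_{\zeta,0}$, i.e. by $H_h$. Thus $k(U)$ is the fixed field of the point stabilizer of $0$ in the affine action on $\F_N$. The cosets $G/H_h$ are in bijection with $\F_N$, so the action on $\Omega$ is the natural affine action of $G_{N,h}$ on $\F_N$.

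It remains to show $k(z)$ is the Galois closure of $k(U)/k(T)$, not just some Galois extension containing it. The core of the affine action $\bigcap_g gH_hg^{-1}$ is trivial, since the stabilizers of distinct points meet only in the identity of an affine group acting faithfully. A Galois extension whose group acts faithfully on the cosets of $\Gal(\cdot/k(U))$ is the Galois closure. Hence the geometric monodromy is \eqref{eq:GNh}.

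For ramification, I would work on the $U$-line directly.
\begin{itemize}
\item The derivative is $\Lambda'(U)=(U^A-1)^{h-1}\bigl((U^A-1)+hAU^A\bigr)$. Since $hA=N-1\equiv-1\pmod p$, the last factor equals $-1$, so $\Lambda'=-(U^A-1)^{h-1}$.
\item Finite ramification therefore occurs only at roots of $U^A=1$. These are $A$ distinct points, because $p\nmid A$, and each has $\Lambda=0$. Locally $\Lambda\sim u_0(\text{unit})(U-u_0)^h$, so the ramification index is $h$, and this is tame since $p\nmid h$.
\item The point $U=0$ is a simple root of $\Lambda$, hence unramified.
\item As a polynomial map, $\infty_U$ is the only point over $\infty_T$, so it is totally ramified.
\end{itemize}
The counting check $1+A\cdot h=N$ confirms these are all the points above $0$. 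So $0$ is a branch value iff $h>1$, and $\infty$ always is. The only mildly delicate step is the Galois-closure identification, which the core argument handles, together with the derivative computation that shows no other finite branch values occur.
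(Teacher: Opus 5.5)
Your proposal is correct and follows essentially the same route as the paper. Both arguments use Artin's theorem for the group $z\mapsto\zeta z+b$ fixing $T=(z^N-z)^h$, identify $k(U)$ as the fixed field of $H_h$, use the trivial core of $H_h$ to conclude $k(z)$ is the Galois closure, and compute $\Lambda_{N,h}'=-(U^A-1)^{h-1}$ for the ramification. The only cosmetic difference is that you obtain $[k(z):k(T)]=Nh$ from the tower $k(z)\supset k(U)\supset k(T)$, whereas the paper reads it off from the degree of $(z^N-z)^h$ in $z$.
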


\begin{proof}
Let
\[
  \widetilde G_{N,h}=\{z\mapsto \alpha z+\beta:\alpha\in H_h,\ \beta\in\F_N\}\le\Aut_k(k(z)).
\]
For $\beta\in\F_N$ one has
\[
  (z+\beta)^N-(z+\beta)=z^N-z,
\]
and for $\alpha\in H_h$ one has
\[
  (\alpha z)^N-\alpha z=\alpha(z^N-z).
\]
Since $\alpha^h=1$, the function $T=(z^N-z)^h$ is invariant under $\widetilde G_{N,h}$.  By Artin's fixed-field theorem for finite groups of field automorphisms,
\[
  [k(z):k(z)^{\widetilde G_{N,h}}]=|\widetilde G_{N,h}|=Nh.
\]
On the other hand, the degree of $T$ as a rational function of $z$ is $Nh$, so
\[
  [k(z):k(T)]=Nh.
\]
Since $T$ is invariant, $k(T)\subseteq k(z)^{\widetilde G_{N,h}}$, and the two fields have the same degree in $k(z)$.  Hence
\[
  k(z)^{\widetilde G_{N,h}}=k(T).
\]
The subgroup $H_h$ fixes $U=z^h$, and $[k(z):k(U)]=h=|H_h|$, so $k(z)^{H_h}=k(U)$.  Finally,
\[
  T=(z^N-z)^h=z^h(z^{N-1}-1)^h=U(U^A-1)^h.
\]
The Galois cover $k(z)/k(T)$ contains $k(U)$.  The subgroup $H_h$ is the stabilizer of the sheet $U=z^h$, and the core of $H_h$ in $\widetilde G_{N,h}$ is trivial: in the affine action it is the intersection of all point stabilizers.  Hence the induced action on the cosets of $H_h$ is faithful and is the natural affine action of $(\F_N,+)\rtimes H_h$ on $\F_N$.  Therefore no proper subextension of $k(z)/k(T)$ can be the normal closure of $k(U)/k(T)$, and $k(z)$ is the Galois closure of $k(U)/k(T)$.

For the finite ramification, compute
\[
  \Lambda_{N,h}'(U)
  =(U^A-1)^{h-1}\bigl(U^A-1+hAU^A\bigr).
\]
Since $hA=N-1=-1$ in characteristic $p$, this becomes
\[
  \Lambda_{N,h}'(U)=-(U^A-1)^{h-1}.
\]
If $h=1$, this derivative is $-1$, so there is no finite ramification.  If $h>1$, the finite critical points are exactly the roots of $U^A=1$, and they all map to $0$.  If $a^A=1$, then $a\ne0$, and $U^A-1$ has a simple zero at $a$ because $p\nmid A$.  Hence locally
\[
  \Lambda_{N,h}(U)=U(U^A-1)^h
\]
has order $h$ at $a$, so the ramification index is $h$.  The point at infinity is the unique pole of the polynomial and is totally ramified.
\end{proof}

\begin{proposition}
\label{prop:inertia}
In the Galois cover
\[
  \Pone_z\longrightarrow \Pone_T,
  \qquad T=(z^N-z)^h,
\]
the points above $T=0$ are the points $z=\beta$ with $\beta\in\F_N$.  Their inertia groups are the conjugates
\[
  H_{h,\beta}=\{z\mapsto \alpha z+(1-\alpha)\beta:\alpha\in H_h\},
\]
all of order $h$.  For $h=1$ these inertia groups are trivial, so $T=0$ is not a branch value.  The point $z=\infty$ is the unique point above $T=\infty$, and its inertia group is the whole affine group $G_{N,h}$.  Thus the finite inertia is tame, while the inertia at infinity has wild translation subgroup $(\F_N,+)$.
\end{proposition}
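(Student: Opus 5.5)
The plan is to read off the fibre of $T$ directly from the explicit Galois cover $z\mapsto T=(z^N-z)^h$ of Theorem~\ref{thm:polynomial}, and to obtain the inertia groups as stabilizers in the group $\widetilde G_{N,h}$ of affine substitutions acting on $\Pone_z$. Since the cover is Galois with group $\widetilde G_{N,h}$ of order $Nh$, the inertia group at a point $P$ is its stabilizer in $\widetilde G_{N,h}$; over an algebraically closed constant field the decomposition and inertia groups coincide. In every case the ramification index equals the order of this stabilizer.

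\textbf{Step 1: the fibre over $T=0$.} The zeros of $(z^N-z)^h=\prod_{\beta\in\F_N}(z-\beta)^h$ are exactly the $N$ points $z=\beta$, each of multiplicity $h$. So the ramification index at each of them is $h$. The stabilizer of $\beta$ under $z\mapsto\alpha z+\gamma$ is given by the condition $\alpha\beta+\gamma=\beta$, that is, $\gamma=(1-\alpha)\beta$. This yields exactly the subgroup $H_{h,\beta}$, which is the conjugate of $H_h=H_{h,0}$ by the translation $z\mapsto z+\beta$. It has order $h$, matching the ramification index, so it is the full inertia group. For $h=1$ it is trivial, so $T=0$ is unramified, in agreement with Theorem~\ref{thm:polynomial}.

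\textbf{Step 2: the fibre over $T=\infty$.} The polynomial $(z^N-z)^h$ has its only pole at $z=\infty$, of order $Nh=[k(z):k(T)]$. Hence $\infty$ is the unique point above $T=\infty$, it is totally ramified, and its inertia group is the whole group $\widetilde G_{N,h}\cong G_{N,h}$. Every affine map fixes $\infty$, which confirms this directly.

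\textbf{Step 3: tameness and wildness.} At the points $z=\beta$ the inertia has order $h$, and $h\mid N-1$ is prime to $p$, so the ramification there is tame. At $\infty$ the inertia has order $Nh$, and its Sylow $p$-subgroup is the normal translation subgroup $(\F_N,+)$. This subgroup is the wild inertia group, with the cyclic quotient $H_h$ as the tame part. To confirm that the translations genuinely act wildly, we check that $z\mapsto z+\beta$ acts nontrivially on the local parameter $1/z$ modulo its square. Alternatively, it is enough to note that any $p$-group inside an inertia group is contained in the first ramification group.

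The only point requiring care is to justify that inertia equals the stabilizer. For a geometric Galois cover of curves over an algebraically closed field this is standard, since the residue field extensions are trivial. Everything else is an explicit factorization.
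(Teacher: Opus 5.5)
Your proposal is correct and follows the paper's proof essentially step for step: the factorization of $z^N-z$ gives the fibre over $T=0$ with index $h$, inertia is identified with the point stabilizer $H_{h,\beta}$, and the single pole of order $Nh$ gives total ramification at $\infty$. One slip in Step~3: a translation $z\mapsto z+\beta$ acts \emph{trivially} on the parameter $1/z$ modulo $(1/z)^2$, since $1/(z+\beta)\equiv 1/z-\beta/z^{2}$, and this is exactly what puts it in the first ramification group. Your fallback argument, that the first ramification group is the unique Sylow $p$-subgroup of the inertia group, is correct and suffices.
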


\begin{proof}
The equation $T=0$ on the Galois closure is $z^N-z=0$, so its solutions are precisely $\F_N$.  Since $(z^N-z)'=-1$, the function $z^N-z$ has a simple zero at each $\beta\in\F_N$, and the additional $h$-th power gives ramification index $h$.  Since $k$ is algebraically closed, residue field extensions at geometric points are trivial, and the inertia group is the stabilizer of the point in the Galois group.  The stabilizer in $G_{N,h}$ of $\beta$ is the displayed conjugate of $H_h$, so it is the inertia group at $z=\beta$.  The function $(z^N-z)^h$ has a unique pole, at $z=\infty$, of order $Nh=|G_{N,h}|$; hence $\infty$ is totally ramified in the Galois cover and its inertia group is all of $G_{N,h}$.
\end{proof}

\begin{proposition}
\label{prop:arithmetic-monodromy}
Let $q=p^m$.  Over $\F_q(T)$, the Galois closure of $T=\Lambda_{N,h}(U)$ is
\begin{equation}
  (\F_q\cdot\F_N)(z),
  \qquad
  T=(z^N-z)^h.
  \label{eq:arith-closure}
\end{equation}
Its constant field is
\[
  \F_q\cdot\F_N=\F_{p^{\operatorname{lcm}(m,e)}}.
\]
The arithmetic monodromy group is
\begin{equation}
  G_{N,h}\rtimes\Gal(\F_q\cdot\F_N/\F_q),
  \label{eq:arith-group}
\end{equation}
where $\Gal(\F_q\cdot\F_N/\F_q)$ acts on $\F_N$ by Frobenius and hence by semilinear automorphisms on $G_{N,h}$.  In particular, the geometric and arithmetic monodromy groups coincide over $\F_q$ if and only if $\F_N\subseteq\F_q$.
\end{proposition}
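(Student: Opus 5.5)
The plan is to descend the geometric picture of Theorem~\ref{thm:polynomial} to $\F_q$ and identify the constant field of the Galois closure. Put $k'=\F_q\cdot\F_N$.

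\textbf{Step 1: the Galois closure over $\F_q$.}
The automorphisms $z\mapsto\alpha z+\beta$ with $\alpha\in H_h\subseteq\F_N^\times$ and $\beta\in\F_N$ are defined over $\F_N$. Hence they act on $k'(z)$ and fix $k'(T)$ with $T=(z^N-z)^h$.

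By the same degree count as in the proof of Theorem~\ref{thm:polynomial}, the field $k'(z)$ is Galois over $k'(T)$ with group $G_{N,h}$. The same reasoning gives $[k'(z):k'(T)]=Nh$.

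Next, $k'(z)$ is Galois over $\F_q(T)$. It is the compositum of the Galois extension $k'(T)/\F_q(T)$, which is a constant field extension, with $k'(z)/k'(T)$. More concretely, $\Gal(k'/\F_q)$ acts on $k'(z)$ by acting on coefficients and fixing $z$, and this action fixes $T$ because $T$ has coefficients in $\F_p$.

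So the group generated by $G_{N,h}$ and the coefficient Frobenius has order $Nh\,[k':\F_q]=[k'(z):\F_q(T)]$. Its fixed field is therefore $\F_q(T)$, and this group is $G_{N,h}\rtimes\Gal(k'/\F_q)$. Conjugating $z\mapsto\alpha z+\beta$ by the coefficient Frobenius gives $z\mapsto\alpha^{q}z+\beta^{q}$, which is the stated semilinear action.

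\textbf{Step 2: $k'(z)$ is the Galois closure of $\F_q(U)/\F_q(T)$.}
The field $k'(z)$ contains $U=z^h$, so it contains the Galois closure $L$ of $\F_q(U)/\F_q(T)$. For the reverse inclusion there are two parts.

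First, after extending constants, $L\cdot\overline{\F}_q$ equals $\overline{\F}_q(z)$ by Theorem~\ref{thm:polynomial}. So $L$ contains an element of the form $z$ up to constants, and the geometric part is the whole of $G_{N,h}$.

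Second, and this is the step I expect to be the main obstacle, I must show the constant field of $L$ is all of $k'$, that is, $L\supseteq\F_N$. I plan to do this by exhibiting elements of $L$ whose ratios generate $\F_N$. The roots of $\Lambda_{N,h}(U)=T$ are $U_\beta=(z+\beta)^h$ for $\beta\in\F_N$, and all of them lie in $L$. I will try to recover $\F_N$ from them.

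If $h=1$, then $z=U_0\in L$, and $U_\beta-U_0=\beta$ for every $\beta\in\F_N$. Hence $\F_N\subseteq L$.

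For general $h$, I will use the ratios $(U_\beta/U_0)=((z+\beta)/z)^h$. These generate $\F_q\bigl(((z+\beta)/z)^h:\beta\bigr)$. To extract constants, I will look at the Puiseux/Laurent expansion at the totally ramified place $\infty$. Since $(1+\beta/z)^h=1+h\beta z^{-1}+\cdots$ and $p\nmid h$, the leading coefficients $h\beta$ are recoverable by a residue-type computation.

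An alternative route is via the counting characterization: the constant field of $L$ is the field of definition of the Galois action, and $G_{N,h}$ has trivial centralizer in the symmetric group on the fibre. I plan to use the cleaner argument: the geometric Galois group $\Gal(L\overline{\F}_q/\overline{\F}_q(T))$ is $G_{N,h}$, and the arithmetic group must act on $G_{N,h}$ compatibly with the action on roots. Frobenius sends the root $(z+\beta)^h$ to $(z^{(q)}+\beta^q)^h$, where $z^{(q)}$ denotes the Frobenius image of $z$. This permutes $\Omega\cong\F_N$ by a map that is semilinear in $\beta\mapsto\beta^q$. Such a map lies in $G_{N,h}$ only if it is $\F_N$-linear, which forces $q$-power Frobenius to act trivially on $\F_N$. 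Hence $A_\pi/G_\pi$ surjects onto $\Gal(k'/\F_q)$, so the constant field is $k'$.

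\textbf{Step 3: the final equivalence.}
The geometric and arithmetic monodromy groups coincide exactly when $k'=\F_q$, which happens if and only if $\F_N\subseteq\F_q$, i.e.\ $e\mid m$.
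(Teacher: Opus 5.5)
Your proof is correct in outline, and its Step 2 takes a different route from the paper's. Steps 1 and 3 match the paper: you get the Galois group of $k'(z)/\F_q(T)$ by an order count, and the semilinear action by conjugating with the coefficient Frobenius.

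The difference is how you show that the constant field of the closure $L$ is all of $k'=\F_q\cdot\F_N$. The paper argues through the Artin--Schreier layer. The closure contains the conjugates $z+\beta$ of a root of $Z^N-Z=w$, and their differences are the constants $\beta\in\F_N$. That is exactly your $h=1$ argument, and there it is airtight because $z=U_0\in L$. For $h>1$, however, neither $z$ nor $w$ is known in advance to lie in $L$; they lie only in $L\overline{\F}_q$, where such differences say nothing about the constants of $L$.

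Your permutation argument fills this gap uniformly in $h$:
\begin{itemize}
\item the coefficient Frobenius $\phi$ fixes $z$ and sends $U_\beta=(z+\beta)^h$ to $U_{\beta^q}$;
\item $G_\pi$ acts by $\beta\mapsto\alpha^{-1}(\beta+b)$;
\item $\beta\mapsto\beta^{q^j}$ lies in $\AGL(1,N)$ only when it is the identity on $\F_N$.
\end{itemize}
The tidiest formulation is that $\Gamma=G_{N,h}\rtimes\Gal(k'/\F_q)$ acts on the $N$ roots by $\beta\mapsto\alpha^{-1}(\beta^{q^j}+b)$. This action is faithful, so $\Gal(k'(z)/L)=1$ and $L=k'(z)$ directly. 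You then no longer need to compare degrees via the geometric part.

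Two points need tightening.

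\textbf{Powers of Frobenius.} As written you only treat the $q$-power Frobenius. That shows $k_L\neq\F_q$ when $\F_N\not\subseteq\F_q$, but not $k_L=k'$. You must run the same computation for every $q^j$: if $\phi^j|_L\in G_\pi$, then $q^j$-Frobenius is trivial on $\F_N$, hence on $k'$. This is what makes the restriction $\Gal(k'/\F_q)\to\Gal(k_L/\F_q)$ injective.

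\textbf{The Puiseux alternative.} As stated this is not yet an argument, because $z^{-1}$ is not known to lie in $L$. To make it work you would use $(U_\beta-U_0)/(U_1-U_0)$, whose value at $\infty$ is $\beta$ since $p\nmid h$. You would also need the fact that the unique place of $L$ over $T=\infty$ has residue field exactly $k_L$, which holds because $e=Nh=[L:k_L(T)]$ there.
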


\begin{proof}
Over $\F_q\cdot\F_N$, Theorem~\ref{thm:polynomial} gives the Galois closure as $(\F_q\cdot\F_N)(z)$, and the geometric deck transformations are exactly
\[
  z\mapsto\alpha z+\beta,
  \qquad \alpha\in H_h,\quad \beta\in\F_N.
\]
Conversely, an arithmetic Galois closure over $\F_q(T)$ contains, after base change to an algebraic closure, all conjugates $z+\beta$ of a root of $Z^N-Z=w$; their differences are the constants $\beta\in\F_N$.  Hence its constant field contains $\F_q\cdot\F_N$, and no larger constant field is present in the rational field $(\F_q\cdot\F_N)(z)$.  This proves \eqref{eq:arith-closure}.

The quotient by the geometric group is the constant Galois group.  Arithmetic Frobenius conjugates
\[
  z\mapsto\alpha z+\beta
  \quad\text{to}\quad
  z\mapsto\alpha^q z+\beta^q,
\]
which preserves $H_h$ because $H_h$ is the unique subgroup of $\F_N^\times$ of order $h$.  This gives the semidirect product \eqref{eq:arith-group}.
\end{proof}

\section{Exact value distributions over all extensions}
\label{sec:exact}

Let $Q=p^m$.  Set
\begin{equation}
  s=p^{\gcd(e,m)},
  \qquad
  C=\frac{N-1}{s-1},
  \qquad
  \kappa=\gcd(h,C).
  \label{eq:parameters}
\end{equation}
Here $s=\#(\F_N\cap\F_Q)$, and $s-1$ divides $N-1$, so $C$ is an integer.  We first record a small arithmetic simplification which also makes the integrality of the formulae below transparent.

\begin{lemma}
\label{lem:parameters}
With the notation above,
\begin{equation}
  \gcd(A,Q-1)=\frac{\kappa(s-1)}h.
  \label{eq:beta-simplified}
\end{equation}
In particular, the integers
\[
  \frac{\kappa(s-1)}h,
  \qquad
  \frac{\kappa(Q-1)}h,
  \qquad
  \frac{\kappa(Q-s)}{hs}
\]
are well defined.  Moreover, for every $\gamma\in\F_N^\times$, the equation
\begin{equation}
  x^{p^m}=\gamma x
  \label{eq:semilinear-equation}
\end{equation}
on $\F_N$ has $s$ solutions if $\Nm_{\F_N/\F_s}(\gamma)=1$, and only the zero solution otherwise.
\end{lemma}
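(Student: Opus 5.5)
The identity \eqref{eq:beta-simplified} reduces to gcd calculations, and we work multiplicatively to avoid divisibility subtleties. Put $g=\gcd(e,m)$, so $s=p^g$. The standard identity $\gcd(p^a-1,p^b-1)=p^{\gcd(a,b)}-1$ gives
\[
  \gcd(N-1,Q-1)=s-1 .
\]
Since $A$ divides $N-1$, we have
\[
  \gcd(A,Q-1)=\gcd\bigl(A,\gcd(N-1,Q-1)\bigr)=\gcd(A,s-1).
\]
So it suffices to prove $h\gcd(A,s-1)=\kappa(s-1)$.

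Write $N-1=hA=C(s-1)$. Then
\[
  h\gcd(A,s-1)=\gcd(hA,h(s-1))=\gcd\bigl(C(s-1),h(s-1)\bigr)=(s-1)\gcd(C,h)=\kappa(s-1).
\]
This proves \eqref{eq:beta-simplified}. The identity itself shows that $h$ divides $\kappa(s-1)$, so the first displayed quantity is an integer.

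For the second quantity, $s-1$ divides $Q-1$, hence
\[
  \frac{\kappa(Q-1)}{h}=\frac{\kappa(s-1)}{h}\cdot\frac{Q-1}{s-1}
\]
is an integer. For the third, $Q-s=s(s^{m/g-1}-1)$, and $s-1$ divides $s^{m/g-1}-1$ (this is trivially true when $m=g$). Therefore
\[
  \frac{\kappa(Q-s)}{hs}=\frac{\kappa(s-1)}{h}\cdot\frac{s^{m/g-1}-1}{s-1}
\]
is an integer.

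For the semilinear equation \eqref{eq:semilinear-equation}, the solution set $W\subseteq\F_N$ is closed under addition, because $x\mapsto x^{p^m}$ is additive. It is also closed under multiplication by $\F_s=\F_N\cap\F_Q$, because $c^{p^m}=c$ for $c\in\F_s$. Hence $W$ is an $\F_s$-subspace.

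On $\F_N$, the map $\phi\colon x\mapsto x^{p^m}$ generates the group $\Gal(\F_N/\F_s)$, which is cyclic of order $d=e/g$. This holds because $p^m$ restricted to $\F_N$ is the Frobenius power $p^{m \bmod e}$, and this power has order $e/\gcd(e,m)$.

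If $x\neq0$ is a solution, set $\gamma=x^{p^m-1}$. Taking the norm gives
\[
  \Nm_{\F_N/\F_s}(\gamma)=\prod_{i<d}\phi^i(x)^{p^m-1}=\frac{\Nm(x)^{p^m}}{\Nm(x)}=1,
\]
since $\Nm(x)\in\F_s$ and $p^m$ fixes $\F_s$.

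Conversely, suppose $\Nm(\gamma)=1$. Hilbert 90 for the cyclic extension $\F_N/\F_s$ with generator $\phi$ gives $\gamma=\phi(y)/y$ for some $y\in\F_N^\times$, so $y$ is a nonzero solution.

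It remains to show $W=\F_s\,y$. If $x\in W$, then $\phi(x/y)=x/y$, so $x/y\in\F_N^{\phi}=\F_s$. Hence $|W|=s$.

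The only delicate point is identifying the fixed field of $x\mapsto x^{p^m}$ on $\F_N$ as $\F_s$ and confirming that this map generates the full cyclic Galois group of $\F_N/\F_s$, which is needed for Hilbert 90. Both follow from the order computation above.
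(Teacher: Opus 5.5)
Your proof is correct, but it takes a different route from the paper in both halves.

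\textbf{The gcd identity.} The paper writes $N-1=(s-1)C$ and $Q-1=(s-1)D$ with $\gcd(C,D)=1$. It splits $h=\kappa h_1$ and $C=\kappa C_1$, deduces $h_1\mid s-1$, and puts $s-1=h_1L$. It then reads off $A=LC_1$ and $Q-1=h_1LD$, so $\gcd(A,Q-1)=L$. You instead reduce to $\gcd(A,s-1)$ via $A\mid N-1$ and use homogeneity of the gcd, $h\gcd(A,s-1)=\gcd(N-1,h(s-1))=(s-1)\gcd(C,h)$. Your version is shorter and needs no coprimality of $C$ and $D$. The paper's version has the small advantage of naming $L$ and $h_1$, so the integrality statements become $\kappa(Q-1)/h=(Q-1)/h_1$ and $\kappa(Q-s)/(hs)=L(D-1)/s$; these are equivalent to your factorizations.

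\textbf{The equation $x^{p^m}=\gamma x$.} The paper counts in the cyclic group $\F_N^\times$. The image of $x\mapsto x^{p^m-1}$ and the kernel of the norm $x\mapsto x^C$ are both the unique subgroup of order $C$, and each fibre has $s-1$ elements. You instead check that $x\mapsto x^{p^m}$ generates $\Gal(\F_N/\F_s)$. You get necessity of $\Nm(\gamma)=1$ by a direct norm computation, a nonzero solution from Hilbert 90, and $W=\F_s y$ from the fixed field.

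Your argument is more structural. It works for any cyclic extension and exhibits the solution set as a one-dimensional $\F_s$-subspace, which is the form used in Corollary~\ref{cor:semilinear-profile} and Theorem~\ref{thm:exact-general}. The paper's counting is more economical. It also sets up the description of the norm as exponentiation by $C$, which Step~4 of Theorem~\ref{thm:exact-general} uses to count $\kappa$.
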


\begin{proof}
Write
\[
  N-1=(s-1)C,
  \qquad
  Q-1=(s-1)D.
\]
Since $\gcd(N-1,Q-1)=s-1$, we have $\gcd(C,D)=1$.  Put $h=\kappa h_1$ and $C=\kappa C_1$ with $\gcd(h_1,C_1)=1$.  The divisibility $h\mid N-1=(s-1)\kappa C_1$ gives $h_1\mid s-1$; write $s-1=h_1L$.  Then
\[
  A=\frac{N-1}{h}=LC_1,
  \qquad
  Q-1=h_1LD.
\]
Since $\gcd(C_1,h_1D)=1$, we obtain
\[
  \gcd(A,Q-1)=L=\frac{\kappa(s-1)}h,
\]
which proves \eqref{eq:beta-simplified}.  The integrality of $\kappa(s-1)/h=L$ and $\kappa(Q-1)/h=(Q-1)/h_1$ is immediate, because $h_1\mid s-1\mid Q-1$.  Finally, if $Q=s^d$, then
\[
  D=\frac{Q-1}{s-1}=1+s+\cdots+s^{d-1},
\]
so $D-1$ is divisible by $s$, and
\[
  \frac{\kappa(Q-s)}{hs}=\frac{L(D-1)}s
\]
is an integer.

For \eqref{eq:semilinear-equation}, nonzero solutions are the solutions of $x^{p^m-1}=\gamma$.  On the cyclic group $\F_N^\times$, the image of $x\mapsto x^{p^m-1}$ is the kernel of the norm $\Nm_{\F_N/\F_s}$.  Each element in this image has exactly $s-1$ nonzero preimages.  Adding $x=0$ gives $s$ solutions in the norm-one case and only zero otherwise.
\end{proof}

\begin{theorem}
\label{thm:exact-general}
For $t\in\F_Q$, let
\[
  r_Q(t)=\#\{u\in\F_Q:\Lambda_{N,h}(u)=t\}.
\]
Then
\begin{equation}
  r_Q(0)=1+\frac{\kappa(s-1)}h.
  \label{eq:zero-fibre}
\end{equation}
Among nonzero target values, the only possible fibre sizes are $0$, $1$, and $s$, and their exact numbers are
\begin{align}
  M_0(Q)&=\#\{t\in\F_Q^\times:r_Q(t)=0\}
     =\frac{\kappa(s-1)Q}{hs},
  \label{eq:general-A0}\\
  M_1(Q)&=\#\{t\in\F_Q^\times:r_Q(t)=1\}
     =\frac{(h-\kappa)(Q-1)}h,
  \label{eq:general-A1}\\
  M_s(Q)&=\#\{t\in\F_Q^\times:r_Q(t)=s\}
     =\frac{\kappa(Q-s)}{hs}.
  \label{eq:general-As}
\end{align}
\end{theorem}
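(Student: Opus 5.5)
The approach is to count fibres directly through the rational Galois closure $U=z^h$, $T=(z^N-z)^h$ of Theorem~\ref{thm:polynomial}, working entirely over $\overline{\F}_p$ with Frobenius $\phi:x\mapsto x^{Q}$. Chebotarev (Theorem~\ref{thm:coset-chebotarev}) only gives asymptotics, but Corollary~\ref{cor:semilinear-profile} already has the right proportions, so the aim is an exact point count whose answer matches those proportions.

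\textbf{Zero fibre.} This is elementary: $\Lambda_{N,h}(u)=0$ iff $u=0$ or $u^A=1$. The number of solutions of $u^A=1$ in $\F_Q$ is $\gcd(A,Q-1)$, which equals $\kappa(s-1)/h$ by Lemma~\ref{lem:parameters}. This gives \eqref{eq:zero-fibre}.

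\textbf{Nonzero targets.} Fix $t\in\F_Q^\times$ and choose $w\in\overline{\F}_p$ with $w^h=t$. The geometric fibre over $t$ consists of $N$ distinct points $U=z^h$, where $z$ ranges over the roots of $z^N-z=\alpha w$ for $\alpha\in H_h$, modulo the $H_h$-action. This fibre is a principal homogeneous space for $G_{N,h}$ modulo $H_h$, i.e.\ an $\F_N$-affine line. Fix one root $z_0$ with $z_0^N-z_0=w$. Then $\phi(z_0)$ satisfies $\phi(z_0)^N-\phi(z_0)=w^Q$, and $(w^Q/w)^h=t^{Q-1}=1$, so $w^Q=\gamma w$ for some $\gamma\in H_h$. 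Multiplying by $\gamma^{-1}$, we get $\gamma^{-1}\phi(z_0)=z_0+b$ for some $b\in\F_N$. Hence on fibre points, identified with $x\in\F_N$ via $U=(z_0+x)^h$, Frobenius acts by the affine semilinear map
\[
x\mapsto \alpha x^{Q}+b' .
\]
A point is $\F_Q$-rational iff it is fixed by this map, so $r_Q(t)$ is the number of fixed points of an element of the coset $\mathcal C_m$. By Lemma~\ref{lem:parameters}, that number is $1$, $0$, or $s$.

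\textbf{Exact counts.} Here the main obstacle is to get exact, not asymptotic, numbers: we must show that as $t$ runs over $\F_Q^\times$, the pairs $(\alpha,b)$ are equidistributed in the right way. I would avoid equidistribution and instead count via three identities.

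First, $M_0+M_1+M_s=Q-1$.

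Second, every $u\in\F_Q$ with $\Lambda(u)\neq0$ lies in exactly one fibre, so
\[
M_1+sM_s=Q-r_Q(0)=Q-1-\frac{\kappa(s-1)}{h}.
\]

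Third, we need the number $M_1$ of $t$ whose Frobenius has scalar part $\alpha$ with $\Nm_{\F_N/\F_s}(\alpha)\neq1$. The class of $\alpha$ is determined by $t$. Since $\alpha=(w^{Q-1})^{\pm1}$ up to the identification, whether the norm is $1$ depends only on the coset of $t$ in $\F_Q^\times$ modulo $h$-th-power-type subgroups. The condition becomes that $t^{(Q-1)/h}$ lies in the norm-one part of $H_h$, which has order $\kappa$. Since $t\mapsto t^{(Q-1)/h}$ maps $\F_Q^\times$ onto $H_h$ with fibres of size $(Q-1)/h$, exactly $\kappa(Q-1)/h$ values of $t$ fall in the norm-one case. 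I would check this step carefully, since it involves $h\mid Q-1$ in general; otherwise I would use $\gcd(h,Q-1)$ and Lemma~\ref{lem:parameters}'s factorization $h=\kappa h_1$ with $h_1\mid Q-1$.

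This gives $M_1=(h-\kappa)(Q-1)/h$. Solving the three linear equations then yields $M_s=\kappa(Q-s)/(hs)$ and $M_0=\kappa(s-1)Q/(hs)$, matching \eqref{eq:general-A0} and \eqref{eq:general-As}.
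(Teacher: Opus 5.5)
Your route is the paper's: lift a nonzero target through $z^N-z=w$ with $w^h=t$, and realize Frobenius on the fibre as $x\mapsto\gamma^{-1}x^{p^m}+b$ with $\gamma=w^{Q-1}\in H_h$. Then apply the norm criterion of Lemma~\ref{lem:parameters}, count the norm-one targets, and close with $M_0+M_s=B$ and the domain count $M_1+sM_s=Q-1-\kappa(s-1)/h$. Your zero fibre, your Frobenius computation and your linear algebra are all correct.

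\textbf{The gap is the count $B=\kappa(Q-1)/h$ of norm-one targets.} You use the map $t\mapsto t^{(Q-1)/h}$, which requires $h\mid Q-1$. Since $h\mid N-1$ and $\gcd(N-1,Q-1)=s-1$, this means $h\mid s-1$. That excludes exactly the nonregular cases the theorem is meant to cover. For example, for $h=N-1$ with $\F_N\not\subseteq\F_Q$ one has $h\nmid Q-1$. Already for $N=4$, $h=3$, $Q=2$ or $8$, the expression $t^{(Q-1)/3}$ is meaningless. In these cases $\gamma=w^{Q-1}$ is not a function of $t$: replacing $w$ by $\xi w$ multiplies it by $\xi^{Q-1}$, which is generally $\ne1$. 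Your closing sentence only names ingredients and gives no argument, so the theorem is proved only when $h\mid s-1$.

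\textbf{How the paper closes it.} It works with Kummer classes. The element $\gamma$ is defined modulo $\{\xi^{Q-1}:\xi\in H_h\}$, a subgroup of order $h/g_h$ with $g_h=\gcd(h,Q-1)$. This subgroup lies in the norm-one subgroup $K_0$ of order $\kappa$, because $N-1\mid C(Q-1)$. There are $g_h$ Kummer classes, each containing $(Q-1)/g_h$ targets, and $\kappa g_h/h$ of them satisfy the norm condition, which gives $B=\kappa(Q-1)/h$.

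\textbf{A shorter fix along the line you hinted at.} Write $h=\kappa h_1$ and $C=\kappa C_1$ with $\gcd(h_1,C_1)=1$; the proof of Lemma~\ref{lem:parameters} gives $h_1\mid s-1\mid Q-1$. Then
$\Nm_{\F_N/\F_s}(\gamma)=w^{(Q-1)\kappa C_1}=t^{C_1(Q-1)/h_1}$.
Since $t^{(Q-1)/h_1}\in\mu_{h_1}$ and $\gcd(C_1,h_1)=1$, the norm condition holds if and only if $t\in(\F_Q^\times)^{h_1}$. That subgroup has $(Q-1)/h_1=\kappa(Q-1)/h$ elements.

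Independence of the choice of $w$ causes no trouble in your setup. Because $s\ge2$, the norm condition is equivalent to $r_Q(t)\ne1$. With this step supplied, the rest of your proof goes through.
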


\begin{proof}
The zero fibre consists of $u=0$ and the $\F_Q$-roots of $u^A=1$.  Equation \eqref{eq:zero-fibre} follows from Lemma~\ref{lem:parameters}.

\smallskip
\noindent\emph{Step 1: lifting the nonzero fibres.}
Let $t\ne0$ and choose $w$ in an algebraic closure with $w^h=t$.  Put
\[
  S_w=\{z:z^N-z=w\}.
\]
After choosing the representative $w$ in its $H_h$-orbit, the map $z\mapsto z^h$ is a bijection from $S_w$ onto the set of geometric roots $u$ of $\Lambda_{N,h}(u)=t$.  Indeed, if $z^N-z=w$, then
\[
  \Lambda_{N,h}(z^h)=z^h(z^{N-1}-1)^h=(z^N-z)^h=t.
\]
Conversely, if $u$ is a root and $z^h=u$, then $(z^N-z)^h=t=w^h$; multiplying $z$ by a unique element of $H_h$ gives $z^N-z=w$.  Two points of $S_w$ with the same $h$-th power differ by a scalar in $H_h$, and the equality $z^N-z=w$ forces that scalar to be $1$.

\smallskip
\noindent\emph{Step 2: the Kummer class and Frobenius.}
Let
\[
  \gamma=w^{Q-1}\in H_h.
\]
Replacing $w$ by $\xi w$ with $\xi\in H_h$ replaces $\gamma$ by $\xi^{Q-1}\gamma$.  Thus the class of $\gamma$ in
\[
  H_h/\{\xi^{Q-1}:\xi\in H_h\}
\]
is attached to $t$.  This class is the Kummer class of $t$: more precisely, it is the value on the arithmetic Frobenius of the cocycle representing the image of $t$ under the connecting homomorphism associated with the Kummer sequence
\[
  1\to\mu_h\to\mathbb G_m\xrightarrow{x\mapsto x^h}\mathbb G_m\to1.
\]
Since $\F_Q^\times$ is cyclic, the number of such Kummer classes is
$g_h=\gcd(h,Q-1)$, and each class contains $(Q-1)/g_h$ target values.

The $Q$-power Frobenius maps $S_w$ to $S_{\gamma w}$, so
\[
  \varphi_t:S_w\longrightarrow S_w,
  \qquad
  z\longmapsto\gamma^{-1}z^Q
\]
is well defined.  After an affine identification $S_w\simeq\F_N$, its linear part is
\begin{equation}
  x\longmapsto \gamma^{-1}x^{p^m}.
  \label{eq:linear-part}
\end{equation}
A point $u=z^h$ lies in $\F_Q$ if and only if $z^Q=\eta z$ for some $\eta\in H_h$.  Comparing $z^Q\in S_{w^Q}=S_{\gamma w}$ with $z^Q\in S_{\eta w}$ gives $\eta=\gamma$.  Hence the $\F_Q$-rational roots above $t$ are exactly the fixed points of $\varphi_t$.

\smallskip
\noindent\emph{Step 3: the norm criterion.}
By Lemma~\ref{lem:parameters}, the linearized fixed-point equation has kernel of size $s$ precisely when
\begin{equation}
  \Nm_{\F_N/\F_s}(\gamma)=1,
  \label{eq:norm-condition}
\end{equation}
and has trivial kernel otherwise.  Therefore a nonzero target has exactly one preimage unless \eqref{eq:norm-condition} holds; under \eqref{eq:norm-condition} it has either no preimages or $s$ preimages.

\smallskip
\noindent\emph{Step 4: counting the Kummer classes.}
The norm is exponentiation by
\[
  C=\frac{N-1}{s-1}.
\]
Thus
\[
  K_0=\{\gamma\in H_h:\Nm_{\F_N/\F_s}(\gamma)=1\}
\]
has order $\kappa=\gcd(h,C)$.  The subgroup $\{\xi^{Q-1}:\xi\in H_h\}$ is contained in $K_0$, because
\[
  \gcd(N-1,Q-1)=s-1
  \quad\text{and hence}\quad
  N-1\mid C(Q-1).
\]
It has order $h/g_h$, so the number of Kummer classes satisfying the norm condition is $\kappa/(h/g_h)$.  Since each class contains $(Q-1)/g_h$ target values, the number $B$ of nonzero target values satisfying the norm condition is
\begin{equation}
  B=\frac{\kappa(Q-1)}h.
  \label{eq:B-count}
\end{equation}
All other nonzero target values have exactly one preimage, which gives \eqref{eq:general-A1}.  Among the $B$ remaining values, let $M_s$ have $s$ preimages and $M_0$ have none.  Counting nonzero domain points and using Lemma~\ref{lem:parameters} gives
\[
  M_1+sM_s=Q-1-\frac{\kappa(s-1)}h.
\]
Together with $M_0+M_s=B$, this gives \eqref{eq:general-A0} and \eqref{eq:general-As}.
\end{proof}

\begin{remark}
The proof is deliberately finite-field exact.  The asymptotic Chebotarev theorem predicts the leading fixed-point proportions when $\F_N\subseteq\F_Q$; Theorem~\ref{thm:exact-general} refines this to an exact formula for every extension field, including the nonregular constant field cases.
\end{remark}

\section{Fibre enumerators, moments, and extremal value sets}
\label{sec:fibre}

Define the fibre enumerator
\[
  \mathcal E_{N,h,Q}(Z)=\sum_{t\in\F_Q}Z^{r_Q(t)}.
\]

\begin{corollary}
\label{cor:fibre-enumerator}
With the notation of Theorem~\ref{thm:exact-general},
\begin{equation}
  \mathcal E_{N,h,Q}(Z)
  =Z^{1+\kappa(s-1)/h}+M_0(Q)+M_1(Q)Z+M_s(Q)Z^s.
  \label{eq:fibre-enumerator}
\end{equation}
Consequently,
\begin{equation}
  \#\Lambda_{N,h}(\F_Q)
  =Q-M_0(Q)
  =Q\left(1-\frac{\kappa(s-1)}{hs}\right).
  \label{eq:general-support}
\end{equation}
For every fixed integer $a\ge1$,
\begin{equation}
  \sum_{t\in\F_Q}r_Q(t)^a
  =\left(1+\frac{\kappa(s-1)}h\right)^a
   +\frac{(h-\kappa)(Q-1)}h
   +s^{a-1}\frac{\kappa(Q-s)}h.
  \label{eq:all-fibre-moments}
\end{equation}
\end{corollary}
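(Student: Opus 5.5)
The plan is to read everything off Theorem~\ref{thm:exact-general}, since that theorem already lists every fibre size that occurs together with its multiplicity. First I would partition $\F_Q$ into the single target $t=0$ and the nonzero targets. By \eqref{eq:zero-fibre}, the target $0$ contributes the monomial $Z^{1+\kappa(s-1)/h}$. By Theorem~\ref{thm:exact-general}, each nonzero target has fibre size $0$, $1$ or $s$, with counts $M_0(Q)$, $M_1(Q)$, $M_s(Q)$. Summing $Z^{r_Q(t)}$ over these classes gives \eqref{eq:fibre-enumerator} directly. As a sanity check I would verify $M_0+M_1+M_s=Q-1$. Here $M_0+M_s=\kappa(Q-1)/h=B$ by \eqref{eq:B-count}, and adding $M_1$ gives $Q-1$.

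For the value set, note that a target is omitted exactly when its fibre is empty. The zero fibre is nonempty because it contains $u=0$, so the omitted targets are precisely the $M_0(Q)$ nonzero targets with empty fibre. Hence
\[
\#\Lambda_{N,h}(\F_Q)=Q-M_0(Q).
\]
Substituting \eqref{eq:general-A0} gives \eqref{eq:general-support}.

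For the moments, I would apply the operator $(Z\,d/dZ)^a$ at $Z=1$ to the enumerator, or simply sum $r_Q(t)^a$ class by class. The zero fibre gives $\bigl(1+\kappa(s-1)/h\bigr)^a$. Empty fibres contribute nothing because $a\ge1$. The singleton fibres give $M_1(Q)$. The size-$s$ fibres give
\[
s^aM_s(Q)=s^{a-1}\kappa(Q-s)/h.
\]
This is exactly \eqref{eq:all-fibre-moments}.

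Nothing here is a real obstacle; the work was done in Theorem~\ref{thm:exact-general} and Lemma~\ref{lem:parameters}. The only points needing care are the following. We must use $a\ge1$ so that $0^a=0$. We should confirm that $0$ does not also appear among the nonzero-target counts, which holds because those counts range over $\F_Q^\times$ only. Finally, for the boundary case $s=1$ the formulas must stay consistent. There $\kappa(s-1)/h=0$ and $M_0=M_s=0$ correctly, since $Q-s=Q-1$ gives $M_s=\kappa(Q-1)/h$ but $M_0=0$. I would note that when $s=1$ the exponents $1$ and $s$ coincide, so the enumerator remains valid as written with the terms merged.
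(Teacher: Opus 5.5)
Your proposal is correct and follows the paper's argument: the paper proves this corollary as a direct repackaging of Theorem~\ref{thm:exact-general}, and you do the same. You read off the enumerator term by term, observe that the zero fibre contains $u=0$ so exactly the $M_0(Q)$ nonzero targets are omitted, and sum $r_Q(t)^a$ class by class using $0^a=0$. The only blemish is your closing aside on $s=1$, which is vacuous and also contradicts itself (it asserts $M_s=0$ and then $M_s=\kappa(Q-1)/h$): since $s=p^{\gcd(e,m)}\ge p\ge 2$, that case never occurs, and $C=(N-1)/(s-1)$ would not even be defined there; in any case, coinciding exponents would not affect the enumerator identity, which is just a sum of monomials.
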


\begin{proof}
This is a direct repackaging of Theorem~\ref{thm:exact-general}.
\end{proof}

\begin{corollary}
\label{cor:collision}
Let $U_1,U_2$ be independent uniformly distributed elements of $\F_Q$, and put $Y_i=\Lambda_{N,h}(U_i)$.  Then
\begin{equation}
  \Pr(Y_1=Y_2)
  =\frac1{Q^2}\left[
    \left(1+\frac{\kappa(s-1)}h\right)^2
    +\frac{(h-\kappa)(Q-1)}h
    +\frac{s\kappa(Q-s)}h
  \right].
  \label{eq:collision}
\end{equation}
Equivalently, the squared $L^2$-distance from the uniform distribution on $\F_Q$ is
\begin{equation}
  \sum_{t\in\F_Q}\left(\frac{r_Q(t)}Q-\frac1Q\right)^2
  =\Pr(Y_1=Y_2)-\frac1Q.
  \label{eq:l2-distance}
\end{equation}
\end{corollary}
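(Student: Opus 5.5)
The collision probability is obtained by conditioning on the common value. For independent uniform $U_1,U_2$ we have
\[
  \Pr(Y_1=Y_2)=\sum_{t\in\F_Q}\Pr(Y_1=t)^2=\frac1{Q^2}\sum_{t\in\F_Q}r_Q(t)^2,
\]
because $\Pr(Y_i=t)=r_Q(t)/Q$. So the first claim \eqref{eq:collision} reduces to the second moment, namely the case $a=2$ of \eqref{eq:all-fibre-moments} in Corollary~\ref{cor:fibre-enumerator}. In that case the third term is $s^{1}\kappa(Q-s)/h$, which is exactly the bracketed expression. As a sanity check, one can read off the same sum directly from Theorem~\ref{thm:exact-general}. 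The zero fibre contributes $(1+\kappa(s-1)/h)^2$. The $M_1$ values contribute $M_1$. The $M_s$ values contribute $s^2M_s=s\kappa(Q-s)/h$. The $M_0$ values contribute nothing.

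For the $L^2$ identity \eqref{eq:l2-distance}, I will expand the square:
\[
  \sum_t\Bigl(\frac{r_Q(t)}Q-\frac1Q\Bigr)^2=\frac1{Q^2}\sum_t r_Q(t)^2-\frac2{Q^2}\sum_t r_Q(t)+\frac{Q}{Q^2}.
\]
Every $u\in\F_Q$ lies in exactly one fibre, so $\sum_t r_Q(t)=Q$. The middle term is therefore $-2/Q$, and the right-hand side becomes $\Pr(Y_1=Y_2)-2/Q+1/Q=\Pr(Y_1=Y_2)-1/Q$.

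There is one typographical issue in the displayed left-hand side. The uniform distribution on $\F_Q$ assigns mass $1/Q$, so the deviation should be $r_Q(t)/Q-1/Q$ as written. Read literally this is $(r_Q(t)-1)/Q$, and that is indeed the difference between the pushforward mass $r_Q(t)/Q$ and the uniform mass $1/Q$. With this reading the computation above is the whole argument.

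No step is a real obstacle. The only points needing care are the identity $\sum_t r_Q(t)=Q$ and correctly matching the $a=2$ exponent $s^{a-1}=s$ against the factor $1/s$ in $M_s(Q)$.
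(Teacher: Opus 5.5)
Your proposal is correct and follows the paper's route: write $\Pr(Y_1=Y_2)=Q^{-2}\sum_t r_Q(t)^2$, apply the case $a=2$ of \eqref{eq:all-fibre-moments}, and obtain \eqref{eq:l2-distance} by expanding the square using $\sum_t r_Q(t)=Q$. The ``typographical issue'' you raise is not one, since the displayed left-hand side is already correct as written.
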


\begin{proof}
The collision probability is $Q^{-2}\sum_t r_Q(t)^2$; apply \eqref{eq:all-fibre-moments} with $a=2$.  The second identity is the standard expansion of the squared $L^2$-distance.
\end{proof}

\begin{corollary}
\label{cor:regular-exact}
Assume $\F_N\subseteq\F_Q$.  Then $s=N$, $\kappa=1$, and $r_Q(0)=A+1$.  Among nonzero target values,
\begin{align*}
  \#\{t:r_Q(t)=0\}&=\frac{(N-1)Q}{Nh},\\
  \#\{t:r_Q(t)=1\}&=\frac{(h-1)(Q-1)}h,\\
  \#\{t:r_Q(t)=N\}&=\frac{Q-N}{Nh}.
\end{align*}
Consequently,
\begin{equation}
  \#\Lambda_{N,h}(\F_Q)=Q\left(1-\frac{N-1}{Nh}\right).
  \label{eq:regular-support}
\end{equation}
\end{corollary}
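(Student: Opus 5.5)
This corollary is the specialization of Theorem~\ref{thm:exact-general} and Corollary~\ref{cor:fibre-enumerator} to the case $\F_N\subseteq\F_Q$. The plan is to compute the parameters $s$, $C$, $\kappa$ in this case and substitute them into the general formulas.

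First, determine the parameters. If $\F_N\subseteq\F_Q$, then $e\mid m$, so $\gcd(e,m)=e$ and $s=p^e=N$. Hence $C=(N-1)/(s-1)=1$, and so $\kappa=\gcd(h,1)=1$.

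Next, substitute these values. Equation \eqref{eq:zero-fibre} gives
\[
  r_Q(0)=1+\frac{N-1}{h}=1+A .
\]
Equations \eqref{eq:general-A0}, \eqref{eq:general-A1} and \eqref{eq:general-As}, with $s=N$ and $\kappa=1$, give the three displayed counts directly:
\[
  \frac{(N-1)Q}{Nh},\qquad \frac{(h-1)(Q-1)}{h},\qquad \frac{Q-N}{Nh}.
\]
Finally, \eqref{eq:general-support} yields
\[
  \#\Lambda_{N,h}(\F_Q)=Q\left(1-\frac{N-1}{Nh}\right),
\]
which is \eqref{eq:regular-support}.

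The computation is purely mechanical, so there is no real obstacle. The only point needing care is the claim $s=N$, which rests on $\F_N\cap\F_Q=\F_N$ under the stated convention on finite fields. As a consistency check, the resulting proportions agree with the scalar profile of Corollary~\ref{cor:scalar-profile}. This matches Proposition~\ref{prop:arithmetic-monodromy}, since here arithmetic and geometric monodromy coincide.
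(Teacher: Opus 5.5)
Your proposal is correct and follows the paper's own proof: both substitute $s=N$, $C=1$, $\kappa=1$ into Theorem~\ref{thm:exact-general}, using $\kappa(s-1)/h=A$ to get $r_Q(0)=A+1$, and read the value-set size off \eqref{eq:general-support}.
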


\begin{proof}
Substitute $s=N$ into Theorem~\ref{thm:exact-general}.  Here $C=1$, hence $\kappa=1$, and $\kappa(s-1)/h=A$.
\end{proof}

\begin{corollary}
\label{cor:full-exact}
Let $h=N-1$ and write
\[
  \Lambda_N(U)=U(U-1)^{N-1}.
\]
For $Q=p^m$, put $s=p^{\gcd(e,m)}$.  Then $r_Q(0)=2$.  Among nonzero target values,
\begin{align*}
  \#\{t:r_Q(t)=0\}&=\frac Qs,\\
  \#\{t:r_Q(t)=1\}&=\frac{(Q-1)(s-2)}{s-1},\\
  \#\{t:r_Q(t)=s\}&=\frac{Q-s}{s(s-1)}.
\end{align*}
Therefore
\begin{equation}
  \#\Lambda_N(\F_Q)=Q\left(1-\frac1s\right).
  \label{eq:full-support}
\end{equation}
\end{corollary}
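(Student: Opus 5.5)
This follows by specializing Theorem~\ref{thm:exact-general} to $h=N-1$, so the only real task is to evaluate the parameters $A$ and $\kappa$ in this case. With $h=N-1$ we have $A=(N-1)/h=1$, so $\Lambda_{N,N-1}(U)=U(U-1)^{N-1}=\Lambda_N(U)$. The remaining point is to compute
\[
  \kappa=\gcd\Bigl(N-1,\tfrac{N-1}{s-1}\Bigr).
\]
Since $C=(N-1)/(s-1)$ divides $N-1$, we get $\kappa=C$. Then
\[
  \frac{\kappa}{h}=\frac{C}{N-1}=\frac{1}{s-1},
\]
and every formula in Theorem~\ref{thm:exact-general} simplifies through this single identity.

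For the zero fibre, \eqref{eq:zero-fibre} gives $r_Q(0)=1+\kappa(s-1)/h=2$. As a direct check, the zero fibre is $\{0,1\}$, since $U^A=1$ now reads $U=1$. Equivalently, Lemma~\ref{lem:parameters} gives $\gcd(1,Q-1)=1$.

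For the nonzero targets, substitute $\kappa/h=1/(s-1)$ into \eqref{eq:general-A0}--\eqref{eq:general-As}:
\[
  M_0=\frac{Q}{s},\qquad
  M_1=\Bigl(1-\frac{1}{s-1}\Bigr)(Q-1)=\frac{(Q-1)(s-2)}{s-1},\qquad
  M_s=\frac{Q-s}{s(s-1)}.
\]
Finally, \eqref{eq:general-support} gives $\#\Lambda_N(\F_Q)=Q-M_0=Q(1-1/s)$.

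There is no real obstacle, but the case $s=2$ should be checked, since then $C=N-1$ and $\kappa=h$. In that case $M_1=0$, which agrees with the formula because the factor $s-2$ vanishes. Likewise, $h=1$ occurs only when $N=2$, and the same formulas remain consistent there. As a sanity check on the counts, the total number of preimages should be $2+M_1+sM_s=Q$, and the total number of targets should be $1+M_0+M_1+M_s=Q$; both identities can be verified directly.
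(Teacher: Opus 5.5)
Your proposal is correct and follows the paper's own proof: specialize Theorem~\ref{thm:exact-general} with $A=1$, $h=N-1$ and $\kappa=C=(N-1)/(s-1)$, so that $\kappa/h=1/(s-1)$ reduces every formula to the stated one. Your extra checks are consistent with the theorem and are sound additions: the zero fibre is $\{0,1\}$, the cases $s=2$ and $N=2$ behave correctly, and the target and preimage totals both equal $Q$.
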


\begin{proof}
This is Theorem~\ref{thm:exact-general} with $A=1$, $h=N-1$, and $\kappa=C=(N-1)/(s-1)$.
\end{proof}

We now compare the exact formula with the upper bound \eqref{eq:intro-wsc}.

\begin{theorem}
\label{thm:wsc-gap}
Assume $Q\ge N$.  The polynomial $\Lambda_{N,h}$ is not a permutation polynomial of $\F_Q$.  Its exact defect from the Wan--Shiue--Chen upper bound for non-permutation polynomials of degree $N$ is
\begin{equation}
  \left(Q-\left\lceil\frac{Q-1}{N}\right\rceil\right)-\#\Lambda_{N,h}(\F_Q)
  =
  \frac{\kappa(s-1)Q}{hs}
  -\left\lceil\frac{Q-1}{N}\right\rceil.
  \label{eq:wsc-defect-general}
\end{equation}
If $\F_N\subseteq\F_Q$, then this simplifies to
\begin{equation}
  \left(Q-\left\lceil\frac{Q-1}{N}\right\rceil\right)-\#\Lambda_{N,h}(\F_Q)
  =\frac{Q(N-1-h)}{Nh}.
  \label{eq:wsc-defect-constant-field}
\end{equation}
Thus, over fields containing $\F_N$, equality with the Wan--Shiue--Chen bound occurs within this scalar-affine family if and only if $h=N-1$.
\end{theorem}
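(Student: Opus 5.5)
The theorem follows from the exact value-set formula \eqref{eq:general-support} of Corollary~\ref{cor:fibre-enumerator}. The plan has three steps: show non-permutation, compute the defect, then specialize to $\F_N\subseteq\F_Q$ and read off the equality case.

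\emph{Non-permutation.} It suffices to show $M_0(Q)=\kappa(s-1)Q/(hs)>0$. Since $Q=p^m$ with $m\ge1$, we have $s=p^{\gcd(e,m)}\ge p\ge2$, so $s-1\ge1$. Also $\kappa=\gcd(h,C)\ge1$, hence $M_0(Q)>0$ and some value of $\F_Q$ is omitted. The zero fibre gives the same conclusion even more directly: $r_Q(0)=1+\kappa(s-1)/h\ge2$, so $\Lambda_{N,h}$ is not injective on $\F_Q$. Neither argument needs $Q\ge N$. That hypothesis only keeps the comparison with \eqref{eq:intro-wsc} meaningful, since the bound applies to polynomials of degree $N$ and this is the regime the paper has in mind.

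\emph{Defect formula.} Subtract \eqref{eq:general-support} from $Q-\lceil (Q-1)/N\rceil$:
\[
\Bigl(Q-\Bigl\lceil\tfrac{Q-1}{N}\Bigr\rceil\Bigr)-Q\Bigl(1-\tfrac{\kappa(s-1)}{hs}\Bigr)=\tfrac{\kappa(s-1)Q}{hs}-\Bigl\lceil\tfrac{Q-1}{N}\Bigr\rceil,
\]
which is \eqref{eq:wsc-defect-general}.

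\emph{Specialization and equality case.} If $\F_N\subseteq\F_Q$, then $e\mid m$, so $s=N$, $C=1$ and $\kappa=1$, as in Corollary~\ref{cor:regular-exact}. Moreover $N\mid Q$, so $\lceil (Q-1)/N\rceil=Q/N$, because $Q/N-1<(Q-1)/N\le Q/N$ and $Q/N$ is an integer. The defect therefore becomes
\[
\frac{(N-1)Q}{Nh}-\frac QN=\frac{Q(N-1-h)}{Nh},
\]
which is \eqref{eq:wsc-defect-constant-field}. Since $h\mid N-1$ forces $h\le N-1$, this quantity is nonnegative and vanishes if and only if $h=N-1$, giving the stated equality criterion.

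The only step needing any care is the ceiling evaluation, which requires $N\mid Q$ to make $Q/N$ an integer. Everything else is direct substitution into Theorem~\ref{thm:exact-general}.
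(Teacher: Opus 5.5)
Your proposal is correct and follows the paper's proof essentially step for step: non-permutation from $r_Q(0)=1+\kappa(s-1)/h\ge 2$, the defect by subtracting \eqref{eq:general-support} from the bound, and the specialization $s=N$, $\kappa=1$, $\lceil (Q-1)/N\rceil=Q/N$ using $N\mid Q$. Your two extra remarks are accurate but do not change the route: $M_0(Q)>0$ gives non-surjectivity independently, and $Q\ge N$ is not needed for the first two claims.
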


\begin{proof}
The polynomial is not a permutation polynomial because Lemma~\ref{lem:parameters} shows that $\kappa(s-1)/h$ is a positive integer, so $r_Q(0)=1+\kappa(s-1)/h\ge2$.  Equation \eqref{eq:wsc-defect-general} follows by subtracting \eqref{eq:general-support} from the bound \eqref{eq:intro-wsc}.  If $\F_N\subseteq\F_Q$, then $s=N$, $\kappa=1$, and, since $Q$ and $N$ are powers of the same prime with $N\mid Q$, $\left\lceil(Q-1)/N\right\rceil=Q/N$.  This gives \eqref{eq:wsc-defect-constant-field}.
\end{proof}

\begin{corollary}
\label{cor:wsc-extremal}
Assume $\F_N\subseteq\F_Q$.  Then
\[
  \Lambda_N(U)=U(U-1)^{N-1}
\]
is not a permutation polynomial of $\F_Q$ and
\begin{equation}
  \#\Lambda_N(\F_Q)=Q-\frac QN
  =Q-\left\lceil\frac{Q-1}{N}\right\rceil.
  \label{eq:full-affine-extremal}
\end{equation}
Thus $\Lambda_N$ attains the Wan--Shiue--Chen upper bound over every finite field containing $\F_N$.
\end{corollary}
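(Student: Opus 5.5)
This is a specialization of Theorem~\ref{thm:wsc-gap} to $h=N-1$, so the plan is to read off the relevant formulas rather than redo any fibre computation.

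\textbf{Non-permutation property.} Since $\F_N\subseteq\F_Q$ forces $Q\ge N$, Theorem~\ref{thm:wsc-gap} applies, and it already shows that $\Lambda_{N,h}$ is not a permutation polynomial of $\F_Q$. For a direct check, Corollary~\ref{cor:full-exact} (or Corollary~\ref{cor:regular-exact} with $A=1$) gives $r_Q(0)=2$: both $U=0$ and $U=1$ map to $0$, and $1\in\F_Q$ trivially.

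\textbf{Value-set size.} Since $\F_N\subseteq\F_Q$, we have $\gcd(e,m)=e$ and hence $s=N$. Substituting into \eqref{eq:full-support} gives $\#\Lambda_N(\F_Q)=Q(1-1/N)=Q-Q/N$. Alternatively, \eqref{eq:regular-support} with $h=N-1$ gives the same value.

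\textbf{Matching the ceiling.} It remains to show $\lceil (Q-1)/N\rceil=Q/N$, as in the proof of Theorem~\ref{thm:wsc-gap}. Because $N=p^e$ and $Q=p^m$ with $e\mid m$, the ratio $Q/N$ is an integer. We also have
\[
  \frac QN-1<\frac{Q-1}N\le\frac QN ,
\]
since $1\le N$. Hence the ceiling equals $Q/N$. Equivalently, \eqref{eq:wsc-defect-constant-field} vanishes at $h=N-1$.

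The only point requiring care is the ceiling identity, and it is immediate once we note that $N\mid Q$. No real obstacle arises, since everything reduces to results established earlier in the paper.
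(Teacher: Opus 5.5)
Your proposal is correct and follows the paper's route: the paper proves this statement simply as the case $h=N-1$ of Theorem~\ref{thm:wsc-gap}, namely the vanishing of \eqref{eq:wsc-defect-constant-field}, and your argument unpacks exactly that. One small slip: the strict inequality $Q/N-1<(Q-1)/N$ needs $N>1$, which holds because $N=p^e\ge2$; $1\le N$ alone is not enough.
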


\begin{proof}
This is Theorem~\ref{thm:wsc-gap} in the case $h=N-1$.
\end{proof}

\begin{corollary}
\label{cor:full-affine-defect}
Let $h=N-1$ and let $Q=p^m\ge N$.  With $s=p^{\gcd(e,m)}$,
\[
  \left(Q-\left\lceil\frac{Q-1}{N}\right\rceil\right)-\#\Lambda_N(\F_Q)
  =
  \frac Qs-\left\lceil\frac{Q-1}{N}\right\rceil.
\]
This defect is zero if and only if $\F_N\subseteq\F_Q$.
\end{corollary}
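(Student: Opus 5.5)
The first claim, the displayed formula for the defect, comes from Theorem~\ref{thm:wsc-gap} with $h=N-1$. In that case $A=1$ and $C=(N-1)/(s-1)$. Since $s-1\mid N-1$, the integer $C$ divides $N-1=h$, so $\kappa=\gcd(h,C)=C$. Hence
\[
\frac{\kappa(s-1)}{hs}=\frac{C(s-1)}{(N-1)s}=\frac1s,
\]
and \eqref{eq:wsc-defect-general} becomes $Q/s-\lceil (Q-1)/N\rceil$. This agrees with \eqref{eq:full-support} in Corollary~\ref{cor:full-exact}. The hypothesis $Q\ge N$ is exactly what Theorem~\ref{thm:wsc-gap} requires.

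For the characterization, the direction ``$\F_N\subseteq\F_Q$ implies zero defect'' is Corollary~\ref{cor:wsc-extremal}: there $s=N$, $N\mid Q$, and $\lceil (Q-1)/N\rceil=Q/N$. For the converse, assume $\F_N\not\subseteq\F_Q$, so $e\nmid m$. Then $s=p^{\gcd(e,m)}$ is a proper divisor-power of $N$, and since $s$ and $N$ are powers of the same prime, $s\le N/p\le N/2$. I will show that $Q/s>\lceil (Q-1)/N\rceil$. We have $\lceil (Q-1)/N\rceil<(Q-1)/N+1$, so it suffices to prove $Q/s\ge (Q-1)/N+1$. Using $s\le N/p$, we get $Q/s\ge pQ/N$, and $pQ/N\ge (Q-1)/N+1$ holds if and only if $(p-1)Q+1\ge N$. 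This last inequality is true because $Q\ge N$ and $p\ge2$. The chain therefore gives $Q/s\ge (Q-1)/N+1>\lceil (Q-1)/N\rceil$, so the defect is strictly positive.

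The only step needing care is this strict inequality in the converse, specifically the use of $s\le N/p$ together with $Q\ge N$. Everything else is direct substitution of $\kappa=C$ into earlier results. One should also note that $Q/s$ is an integer, since $s\mid Q$, so the defect is a nonnegative integer. This is consistent with the Wan--Shiue--Chen bound, which the argument above re-derives in this special case.
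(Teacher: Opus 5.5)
Your proof is correct and follows the paper's argument. You substitute $h=N-1$, so that $\kappa=C$ and $\kappa(s-1)/(hs)=1/s$ as in Corollary~\ref{cor:full-exact}, and then compare $Q/s$ with the ceiling. The only difference is in the converse: the paper notes that $Q\ge N$ already forces $N\mid Q$ as integers, so $\lceil (Q-1)/N\rceil=Q/N$ in every case and the defect is exactly $Q/s-Q/N$, which vanishes if and only if $s=N$. Your estimate via $s\le N/p$ and $\lceil x\rceil<x+1$ is valid but not needed.
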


\begin{proof}
Use Corollary~\ref{cor:full-exact}.  If $s=N$, equivalently $e\mid m$, the defect is zero.  Conversely, if $s<N$, then $Q/s>Q/N=\lceil(Q-1)/N\rceil$ because $Q\ge N$ and $N\mid Q$.
\end{proof}

\begin{table}[t]
\centering
\caption{Full-affine examples for $\Lambda_N(U)=U(U-1)^{N-1}$.  Here $Q=p^m$ and $s=p^{\gcd(e,m)}$.}
\label{tab:examples}
\begin{tabular}{cccccc}
\toprule
$p$ & $N$ & $Q$ & $s$ & missing values & value set size \\
\midrule
$2$ & $4$ & $16$ & $4$ & $4$ & $12$ \\
$2$ & $8$ & $64$ & $8$ & $8$ & $56$ \\
$3$ & $9$ & $729$ & $9$ & $81$ & $648$ \\
$5$ & $25$ & $625$ & $25$ & $25$ & $600$ \\
$3$ & $27$ & $729$ & $27$ & $27$ & $702$ \\
\bottomrule
\end{tabular}
\end{table}

\section{Coprime pullbacks to elliptic curves}
\label{sec:pullback}

The exact formula above is a polynomial statement over the affine line.  The same geometric monodromy can be transported to other bases by  ``coprime'' pullback.  We record this standard base-change consequence for the explicit scalar-affine cover constructed above.

\begin{proposition}
\label{thm:pullback}
Let $k$ be algebraically closed of characteristic $p$, let $E/k$ be an elliptic curve, and let
\[
  \tau:E\longrightarrow\Pone_T
\]
be a finite separable morphism of degree $d$.  Assume
\[
  \gcd(d,Nh)=1.
\]
Let $X_{N,h,\tau}$ be the normalization of the fibre product
\[
  E\times_{\Pone_T}\Pone_U,
\]
where $\Pone_U\to\Pone_T$ is the cover $T=\Lambda_{N,h}(U)$.  On the affine locus over $\Aone_T$, this fibre product is given by
\[
  \Lambda_{N,h}(U)=\tau(P),
  \qquad P\in E.
\]
Then the projection
\[
  \pi_{N,h,\tau}:X_{N,h,\tau}\longrightarrow E
\]
is a geometrically connected degree $N$ cover with geometric monodromy $G_{N,h}$ in its natural degree $N$ action.
\end{proposition}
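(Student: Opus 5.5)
The plan is to reduce everything to a linear disjointness statement of function fields over $k(T)$. Let $M=k(z)$ be the Galois closure of $k(U)/k(T)$ from Theorem~\ref{thm:polynomial}, with $\Gal(M/k(T))\simeq G_{N,h}$ of order $Nh$, and let $k(E)\supseteq k(T)$ be the function field of $E$ via $\tau$, of degree $d$. I will show that $M\cap \widetilde{k(E)}=k(T)$, where $\widetilde{k(E)}$ is the Galois closure of $k(E)/k(T)$. Actually it suffices to show $M$ and $k(E)$ are linearly disjoint over $k(T)$, i.e.\ $[M\cdot k(E):k(E)]=Nh$. Then $\Gal(M k(E)/k(E))\simeq \Gal(M/M\cap k(E))$ by restriction, and since $M/k(T)$ is Galois, linear disjointness is equivalent to $M\cap k(E)=k(T)$.

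For the intersection: $F=M\cap k(E)$ is an intermediate field of $k(E)/k(T)$, so $[F:k(T)]$ divides $d$ (tower law); and it is an intermediate field of $M/k(T)$, so $[F:k(T)]$ divides $Nh$. Since $\gcd(d,Nh)=1$, $F=k(T)$. Hence $Mk(E)/k(E)$ is Galois with group $G_{N,h}$, acting on the $N$ embeddings of $k(E)(U)$, where $U$ is a root of $\Lambda_{N,h}(U)=\tau$. Next I will check that $k(E)(U)=k(E)\otimes_{k(T)}k(U)$ is a field of degree $N$ over $k(E)$: the restriction isomorphism identifies the stabilizer of $U$ in $\Gal(Mk(E)/k(E))$ with $H_h$, so $[k(E)(U):k(E)]=[G_{N,h}:H_h]=N$, i.e.\ $\Lambda_{N,h}(U)-\tau$ stays irreducible over $k(E)$. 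Thus the normalization $X_{N,h,\tau}$ of the fibre product is irreducible with function field $k(E)(U)$, and it is geometrically connected because $k$ is algebraically closed and the constant field of $k(E)(U)\subseteq Mk(E)$ is $k$. Its Galois closure over $k(E)$ is $Mk(E)$: it is Galois, contains $k(E)(U)$, and the core of $H_h$ in $G_{N,h}$ is trivial (as in the proof of Theorem~\ref{thm:polynomial}), so no smaller normal subextension contains all conjugates of $U$. The action on the $N$ roots is the action on cosets of $H_h$, which is the natural affine action on $\F_N$.

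I expect the main subtlety, rather than the obstacle, to be bookkeeping: justifying that the normalization of the fibre product corresponds exactly to the compositum field (standard once irreducibility is known), and noting that separability of $\tau$ keeps $k(E)/k(T)$ separable so the Galois theory applies. The degree-coprimality argument itself is immediate, so no ramification analysis is needed.
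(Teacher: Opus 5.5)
Your proposal is correct and follows essentially the same route as the paper: the tower-law divisibility argument gives $M\cap k(E)=k(T)$, Galoisness of $M/k(T)$ upgrades this to linear disjointness, and restriction identifies $\Gal(Mk(E)/k(E))$ with $G_{N,h}$, with the sheet stabilizer corresponding to $H_h$. Your explicit use of the core-freeness of $H_h$ to confirm that $Mk(E)$ is the Galois closure of $k(E)(U)/k(E)$ spells out a step the paper only asserts.
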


\begin{proof}
Let $F=k(T)$ and view $K=k(E)$ as an $F$-field via the pullback
\[
  \tau^*:F=k(T)\hookrightarrow k(E),
  \qquad
  T\longmapsto \tau^*(T).
\]
We write $\tau$ also for the rational function $\tau^*(T)\in k(E)$.  Let $M=k(z)$ be the Galois closure from Theorem~\ref{thm:polynomial}.  Then $[M:F]=Nh$ and $[K:F]=d$.  Since $M/F$ is Galois, the degree of $M\cap K$ over $F$ divides both $Nh$ and $d$.  By the coprimality assumption,
\[
  M\cap K=F.
\]
Thus $M$ and $K$ are linearly disjoint over $F$.

Every intermediate field of $M/F$, in particular $k(U)$, is therefore linearly disjoint from $K$.  Hence
\[
  K\otimes_F k(U)
\]
is a field, namely $Kk(U)$.  This is the function field of the normalization of the fibre product, so the pullback is connected.  Since $k$ is algebraically closed, this is geometric connectedness.  Moreover,
\[
  [Kk(U):K]=[k(U):F]=N,
\]
which gives the degree.

The Galois closure of $Kk(U)/K$ is $MK$.  By linear disjointness, restriction gives an isomorphism
\[
  \Gal(MK/K)\simeq\Gal(M/F).
\]
Under this isomorphism the stabilizer of the sheet $Kk(U)$ corresponds to the stabilizer of $k(U)$.  Therefore the induced permutation action on the $N$ sheets is the same as for the original cover $\lambda_{N,h}$, namely the natural degree-$N$ action of $G_{N,h}$.
\end{proof}

\begin{proposition}
\label{prop:prescribed-degree}
Let $E/k$ be an elliptic curve with origin $\cO$.  For every integer $d\ge2$ there exists $\tau\in k(E)$ with pole divisor
\[
  (\tau)_\infty=d[\cO].
\]
It defines a morphism $E\to\Pone$ of degree $d$.  If $\operatorname{char}k\nmid d$, the morphism is separable.
\end{proposition}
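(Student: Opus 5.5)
The plan is to use Riemann--Roch on $E$ together with the group law, which gives the classical description of $\mathcal L(d[\cO])$. First, since $E$ has genus $1$ and $\deg(d[\cO])=d\ge1$, Riemann--Roch gives $\ell(d[\cO])=d$ and $\ell((d-1)[\cO])=d-1$. Here we use $\ell(D)=\deg D$ for $\deg D\ge1$ on a genus-one curve, and $\ell(0)=1$ when $d=1$, which is not needed. Because $d\ge2$, the inclusion $\mathcal L((d-1)[\cO])\subsetneq\mathcal L(d[\cO])$ is strict. So there is $\tau\in\mathcal L(d[\cO])\setminus\mathcal L((d-1)[\cO])$, and such a $\tau$ has a pole of exact order $d$ at $\cO$ and no other poles. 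This gives $(\tau)_\infty=d[\cO]$. Concretely, one can also take $\tau=x^{a}y^{b}$ with $2a+3b=d$ in a Weierstrass model, since $x$ and $y$ have poles of orders $2$ and $3$ at $\cO$ and every $d\ge2$ is of the form $2a+3b$ with $a,b\ge0$. I would mention this as an explicit alternative.

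Second, $\tau$ is nonconstant, so it defines a morphism $E\to\Pone$. Its degree equals $[k(E):k(\tau)]$, and this equals the degree of the pole divisor $\deg(\tau)_\infty=d$ by the standard identity $\deg\tau=\deg(\tau)^*[\infty]$ (e.g.\ Stichtenoth, Theorem~1.4.11).

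Third, for separability suppose $\operatorname{char}k=p\nmid d$. If $k(E)/k(\tau)$ were inseparable, its inseparable degree would be a positive power of $p$ dividing the degree $d$. Since $[k(E):k(\tau)]=d$ is prime to $p$, the extension is separable. In characteristic $0$ there is nothing to prove.

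The main point needing care is the exactness of the pole order. This is exactly why $d\ge2$ is required: $\ell([\cO])=\ell(0)=1$, so no function has a single simple pole. The strictness of the Riemann--Roch dimension jump handles this, and the remaining steps are routine.
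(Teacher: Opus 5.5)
Your proof is correct and follows the paper's argument: the Riemann--Roch jump $\ell(d[\cO])=d>\ell((d-1)[\cO])=d-1$ produces a function with pole divisor exactly $d[\cO]$, and the degree of the morphism is the degree of that pole divisor. The only difference is the separability step. You use that the inseparable degree is a power of $p$ dividing $[k(E):k(\tau)]=d$, whereas the paper argues that an inseparable $\tau$ would lie in $k(E)^p$ and so have all valuations divisible by $p$, contradicting $v_{\cO}(\tau)=-d$; both are valid, and yours is slightly more elementary since it uses only the degree.
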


\begin{proof}
By Riemann--Roch on a genus-one curve,
\[
  \ell(d[\cO])=d
  \qquad(d\ge1);
\]
see \cite[Theorem~1.5.15 and Corollary~1.6.8]{Stichtenoth2009}.  Hence
\[
  \ell(d[\cO])>\ell((d-1)[\cO])
\]
for $d\ge2$, so there exists a function with a pole of exact order $d$ at $\cO$ and no other poles.  Its polar divisor has degree $d$, hence the induced morphism $E\to\Pone$ has degree $d$.

If the morphism were inseparable in positive characteristic $p$, then $\tau\in k(E)^p$ because $k$ is perfect.  In particular all valuations of $\tau$ would be divisible by $p$.  Since $v_{\cO}(\tau)=-d$ and $p\nmid d$, this cannot happen.  In characteristic zero separability is automatic.  Thus the morphism is separable whenever $\operatorname{char}k\nmid d$.
\end{proof}

\begin{corollary}
\label{cor:existence-pullbacks}
Let $k$ be algebraically closed of characteristic $p$, let $E/k$ be an elliptic curve, and let $d\ge2$ satisfy $\gcd(d,Nh)=1$.  Then there exists a finite separable morphism
\[
  \tau:E\to\Pone
\]
of degree $d$.  For every such $\tau$, the normalization of the fibre product
\[
  \Lambda_{N,h}(U)=\tau(P)
\]
is geometrically connected over $E$, has degree $N$, and has geometric monodromy $G_{N,h}$ in its natural degree-$N$ action.
\end{corollary}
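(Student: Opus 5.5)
The plan is to combine Proposition~\ref{prop:prescribed-degree} with Proposition~\ref{thm:pullback}; the corollary is essentially their concatenation, and the only real content lies in checking that the hypotheses match.

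\textbf{Step 1: existence of $\tau$.} Since $p\mid N$ and $\gcd(d,Nh)=1$, we get $p\nmid d$. Proposition~\ref{prop:prescribed-degree} then applies for $d\ge2$. It produces $\tau\in k(E)$ with $(\tau)_\infty=d[\cO]$, which defines a morphism $E\to\Pone$ of degree $d$. Because $\operatorname{char}k=p\nmid d$, this morphism is separable. This proves the existence claim.

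\textbf{Step 2: properties of the pullback.} Let $\tau:E\to\Pone$ be any finite separable morphism of degree $d$ with $\gcd(d,Nh)=1$; we do not need $\tau$ to be the one constructed in Step~1. Identify $\Pone$ with $\Pone_T$ via the coordinate $T=\tau$. The hypotheses of Proposition~\ref{thm:pullback} are then satisfied verbatim: $k$ is algebraically closed, $\tau$ is finite separable of degree $d$, and $\gcd(d,Nh)=1$. That proposition gives the three required properties of the normalization $X_{N,h,\tau}$ of $E\times_{\Pone_T}\Pone_U$, whose affine part is $\Lambda_{N,h}(U)=\tau(P)$:
\begin{itemize}
  \item it is geometrically connected over $E$;
  \item it has degree $N$;
  \item its geometric monodromy is $G_{N,h}$ acting naturally on $\F_N$.
\end{itemize}

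\textbf{Where care is needed.} There is no serious obstacle. The only point to check is the divisibility bookkeeping: that $\gcd(d,Nh)=1$ forces $p\nmid d$, which is exactly what lets Step~1 use the separability criterion of Proposition~\ref{prop:prescribed-degree}. The key fact behind Step~2, namely $[M\cap K:F]\mid\gcd(Nh,d)$, is already established inside Proposition~\ref{thm:pullback}, so nothing new has to be proved there.
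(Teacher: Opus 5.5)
Your proposal is correct and is essentially the paper's own argument. You deduce $p\nmid d$ from $\gcd(d,Nh)=1$ and $N=p^e$, and use Proposition~\ref{prop:prescribed-degree} to get a separable $\tau$ with pole divisor $d[\cO]$. Then, for an arbitrary such $\tau$, you read off geometric connectedness, degree $N$, and monodromy $G_{N,h}$ from Proposition~\ref{thm:pullback}.
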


\begin{proof}
By Proposition~\ref{prop:prescribed-degree} there is a function $\tau\in k(E)$ with pole divisor $d[\cO]$, hence a morphism $E\to\Pone$ of degree $d$.  Since $\gcd(d,Nh)=1$ and $N=p^e$, we have $p\nmid d$, so the morphism is separable.  The final assertions follow from Proposition~\ref{thm:pullback}.
\end{proof}

\begin{proposition}
\label{prop:genus}
Assume the hypotheses of Proposition~\ref{thm:pullback}.  Suppose in addition that $\tau$ is unramified above all branch values of $\Lambda_{N,h}$, equivalently above $\infty$ and, when $h>1$, above $0$.  Then
\[
  g(X_{N,h,\tau})=1+d(N-1).
\]
\end{proposition}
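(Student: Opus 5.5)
The plan is to apply Riemann--Hurwitz to the degree-$N$ separable cover $\pi_{N,h,\tau}:X_{N,h,\tau}\to E$. Since $g(E)=1$, this reads
\[
  2g(X_{N,h,\tau})-2=N\cdot 0+\deg\mathfrak D_\pi ,
\]
where $\mathfrak D_\pi$ is the different. Everything therefore reduces to computing $\deg\mathfrak D_\pi$. I would compute it by transporting the local ramification data of $\lambda_{N,h}$ from Theorem~\ref{thm:polynomial} through the unramified base change $\tau$.

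\emph{Step 1: calibrate the different of $\lambda_{N,h}$.} I will not compute the wild contribution at $\infty_U$ directly. Instead I read it off from Riemann--Hurwitz for $\lambda_{N,h}:\Pone_U\to\Pone_T$:
\[
  -2=-2N+\delta_0+\delta_\infty .
\]
By Theorem~\ref{thm:polynomial}, the finite ramification is tame. It consists of the $A$ roots of $U^A=1$, each of index $h$, while $U=0$ is unramified. Hence $\delta_0=A(h-1)=N-1-A$, and this vanishes when $h=1$, consistently. It follows that $\delta_\infty=2N-2-(N-1-A)=N-1+A$, concentrated at the single totally ramified point $\infty_U$.

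\emph{Step 2: locality of the different under étale base change.} Let $P\in E$ with $\tau(P)=y$. By hypothesis $\tau$ is unramified at $P$, so the completion $\widehat{\cO}_{E,P}$ is identified with $\widehat{\cO}_{\Pone,y}$ via $\tau$. The normalization of the fibre product above $P$ is then computed locally by base-changing the completed normalization of $\Pone_U$ over $y$. Thus the points of $X_{N,h,\tau}$ above $P$ correspond bijectively to the points of $\Pone_U$ above $y$, with the same ramification indices and the same local different exponents.

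It follows that $\pi_{N,h,\tau}$ is unramified above every $P$ with $\tau(P)\notin\{0,\infty\}$, since $\lambda_{N,h}$ is étale there. Because $\tau$ is unramified above $0$ and above $\infty$, each of these values has exactly $d$ distinct preimages in $E$.

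\emph{Step 3: sum the contributions.} Each of the $d$ points of $E$ above $\infty$ contributes $N-1+A$, and each of the $d$ points above $0$ contributes $N-1-A$. Therefore
\[
  \deg\mathfrak D_\pi=d(N-1+A)+d(N-1-A)=2d(N-1),
\]
which gives $2g(X_{N,h,\tau})-2=2d(N-1)$, that is, $g(X_{N,h,\tau})=1+d(N-1)$. Geometric connectedness and $\deg\pi=N$, both needed for Riemann--Hurwitz to yield the genus of a single curve, come from Proposition~\ref{thm:pullback}.

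The step I expect to need the most care is Step 2, and specifically the wild point at infinity. There one must justify that the different exponent, and not just the ramification index, is preserved under pulling back along the unramified map $\tau$. I would argue this through completions: an unramified extension of complete discrete valuation rings with algebraically closed residue field is an isomorphism of complete local rings. Hence the local extension $\widehat{\cO}_{X,Q}/\widehat{\cO}_{E,P}$ is literally isomorphic to $\widehat{\cO}_{\Pone_U,\infty}/\widehat{\cO}_{\Pone_T,\infty}$, and the different, being defined locally, agrees.
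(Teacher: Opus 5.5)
Your proof is correct and follows the paper's argument. Riemann--Hurwitz for $\lambda_{N,h}$ gives total different degree $2N-2$. Unramified base change along $\tau$ preserves each local different exponent and repeats it $d$ times, so Riemann--Hurwitz over $E$ gives $2g-2=d(2N-2)$. Your split into $\delta_0=N-1-A$ and $\delta_\infty=N-1+A$, and the completion isomorphism, just spell out what the paper asserts in one line.

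One small wording fix. At points $P$ with $\tau(P)\notin\{0,\infty\}$, and above $0$ when $h=1$, the hypothesis does not make $\tau$ unramified, so the completion argument does not apply there. What you need instead is that étaleness is preserved under arbitrary base change. Your phrase ``since $\lambda_{N,h}$ is \'etale there'' already supplies this, so present it as that fact rather than as a consequence of the completion argument.
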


\begin{proof}
By Riemann--Hurwitz formula \cite[Theorem~3.4.13]{Stichtenoth2009}, the original degree-$N$ cover $\Pone_U\to\Pone_T$ has total different degree $2N-2$:
\[
  -2=N(-2)+\deg\operatorname{Diff}(\lambda_{N,h}).
\]
Thus $\deg\operatorname{Diff}(\lambda_{N,h})=2N-2$.  The assumption that $\tau$ is unramified over the branch values means that, locally at every branch value, the base change of the corresponding dvr is unramified.  Unramified base change preserves different exponents; see \cite[Chapter~3, Section~3.4]{Stichtenoth2009} or \cite[Chapter~III, Sections~1--4]{SerreLocalFields}.  Hence each local contribution of the original cover appears $d$ times in the pullback, and the total different degree of $X_{N,h,\tau}\to E$ is $d(2N-2)$.

Since $E$ has genus one, Riemann--Hurwitz again gives
\[
  2g(X_{N,h,\tau})-2=d(2N-2),
\]
which is the stated formula.
\end{proof}

\begin{remark}
The functions supplied by Proposition~\ref{prop:prescribed-degree} need not satisfy the unramifiedness hypothesis of Proposition~\ref{prop:genus}; indeed a function with pole divisor $d[\cO]$ is ramified above $\infty$.  Proposition~\ref{prop:genus} is a conditional genus formula for choices of $\tau$ whose branch locus avoids the branch values of $\lambda_{N,h}$.
\end{remark}

\begin{corollary}
\label{cor:elliptic-cheb}
Assume that the cover in Proposition~\ref{thm:pullback} is defined over $\F_q$, and let $k_L$ be the constant field of its arithmetic Galois closure.  If $k_L\subseteq\F_Q$, then, for every $j$, the number $M_j(Q)$ of unramified points $P\in E(\F_Q)$ for which the fibre of $\pi_{N,h,\tau}$ above $P$ has exactly $j$ rational points satisfies
\[
  M_j(Q)=\rho_j(G_{N,h},\F_N)\#E(\F_Q)+O_{N,h,\tau}(Q^{1/2}),
\]
where $\rho_j(G_{N,h},\F_N)$ denotes the fixed-point probability in the natural action on $\F_N$.
In particular,
\[
  \#\pi_{N,h,\tau}(X_{N,h,\tau}(\F_Q))
  =\left(1-\frac{N-1}{Nh}\right)\#E(\F_Q)+O_{N,h,\tau}(Q^{1/2}).
\]
For $h=N-1$ the main term is $(1-1/N)\#E(\F_Q)$.  If the arithmetic constant field of the pullback is $\F_q\cdot\F_N$, it is enough to assume $\F_N\subseteq\F_Q$.
\end{corollary}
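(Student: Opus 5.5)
The plan is to apply Theorem~\ref{thm:coset-chebotarev} to the pulled-back cover $\pi_{N,h,\tau}:X_{N,h,\tau}\to E$ over $\F_q$, with $Y=E$ and $\Omega$ the $N$ sheets. The first step identifies the arithmetic monodromy data. By Proposition~\ref{thm:pullback}, the geometric Galois closure is $MK$. Its geometric group is $\Gal(MK/K)\simeq G_{N,h}$, acting on the $N$ sheets exactly as in the natural affine action on $\F_N$. Over $\F_q$ the arithmetic closure $L$ has constant field $k_L$, and $A_\pi/G_\pi\simeq\Gal(k_L/\F_q)$.

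The second step uses the hypothesis $k_L\subseteq\F_Q$, i.e. $[k_L:\F_q]\mid m$ for $Q=q^m$. Then $\bar\phi^m$ is trivial in $A_\pi/G_\pi$, so the Frobenius coset $A_m$ of \eqref{eq:frob-coset} equals $G_\pi$ itself. Consequently
\[
\rho_j(m)=\frac1{|G_\pi|}\#\{\sigma\in G_\pi:F(\sigma)=j\}=\rho_j(G_{N,h},\F_N).
\]
Equivalently, one may base change to $k_L$, where the closure becomes regular. Theorem~\ref{thm:coset-chebotarev} then gives the count over all $P\in E(\F_Q)$ with error $O(Q^{1/2})$. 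Restricting to unramified $P$ removes only the finitely many branch points, which costs $O(1)$. This proves the first display.

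The third step gives the support statement. The image $\pi(X(\F_Q))$ consists of the points with $r_Q\ge1$. Summing over $j\ge1$, or equivalently using Corollary~\ref{cor:distributional}, yields main term $(1-\rho_0)\#E(\F_Q)$, with the branch points again absorbed in the error. Corollary~\ref{cor:scalar-profile} supplies $\rho_0=(N-1)/(Nh)$, and this equals $1/N$ when $h=N-1$. The final sentence is then immediate: if $k_L=\F_q\cdot\F_N$, then $k_L\subseteq\F_Q$ holds iff $\F_N\subseteq\F_Q$, because $\F_q\subseteq\F_Q$ always.

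The main obstacle is the justification, in the second step, that the specialized fibre count $r_Q(P)$ equals the number of fixed points of $\Frob_P$ acting on the sheets. For unramified $P$ this is the standard specialization invoked in the proof of Theorem~\ref{thm:coset-chebotarev}. I will cite that argument, noting that $X_{N,h,\tau}$ is the normalization, so the points over unramified $P$ correspond bijectively to the sheets.
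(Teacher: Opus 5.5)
Your proposal is correct and follows the paper's proof: you apply Theorem~\ref{thm:coset-chebotarev} to the pullback and use $k_L\subseteq\F_Q$ to collapse the Frobenius coset $A_m$ to $G_\pi\simeq G_{N,h}$ acting naturally on $\F_N$. You then read off the proportions from Corollary~\ref{cor:scalar-profile}. Your extra bookkeeping (branch points costing $O(1)$, and $\F_q\cdot\F_N\subseteq\F_Q$ holding if and only if $\F_N\subseteq\F_Q$ because $\F_q\subseteq\F_Q$) only makes explicit what the paper's three-line proof leaves implicit.
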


\begin{proof}
Apply Theorem~\ref{thm:coset-chebotarev} to the pullback cover.  The condition $k_L\subseteq\F_Q$ makes the sampled Frobenius coset the geometric one.  The fixed-point probabilities are those of Corollary~\ref{cor:scalar-profile}.
\end{proof}

\section{Comparison with symmetric monodromy}
\label{sec:symmetric}

Let $D_n$ be the number of derangements in the symmetric group $S_n$:
\[
  D_n=n!\sum_{i=0}^n\frac{(-1)^i}{i!}.
\]
For the natural action of $S_n$, the number of permutations with exactly $j$ fixed points is $\binom njD_{n-j}$.  Therefore a degree-$n$ cover with regular Galois closure and natural monodromy $S_n$ satisfies
\[
  \rho_j=\frac{\binom njD_{n-j}}{n!},
  \qquad 0\le j\le n.
\]
In particular,
\[
  \frac{\#\pi(X(\F_Q))}{\#Y(\F_Q)}
  =1-\frac{D_n}{n!}+O_\pi(Q^{-1/2}).
\]
Since $D_n/n!\to e^{-1}$, the support density for natural symmetric monodromy tends to $1-e^{-1}$; for a much broader fixed-point theory in symmetric groups see  \cite[Theorem~1.1]{DiaconisFulmanGuralnick2008}.

By contrast, the full affine group $\AGL(1,N)$ has derangement proportion exactly $1/N$, and the polynomials $\Lambda_N$ realize this optimal fixed-point profile exactly over every field containing $\F_N$.  The comparison is summarized as follows.  The third fixed-point moment in the table is
\[
  |G|^{-1}\sum_{g\in G}\#\Fix(g)^3.
\]

\begin{center}
\small
\begin{tabular}{@{}llll@{}}
\toprule
monodromy & derangements & support & $\mathbb E(F^3)$ \\
\midrule
$\AGL(1,n)$, $n=p^a$ & $1/n$ & $1-1/n$ & $n+2$ \\
$S_n$ & $\sim e^{-1}$ & $\sim1-e^{-1}$ & $5$ for $n\ge3$ \\
\bottomrule
\end{tabular}
\end{center}

Thus increasing degree while retaining generic symmetric monodromy does not make a one-dimensional cover nearly surjective.  The scalar-affine family gives an explicit finite-field model for the opposite mechanism: special affine monodromy produces optimal support, and our paper computes the resulting fibre distribution exactly.

\end{document}